\def\n@te#1{\textsf{\boldmath \textbf{$\langle\!\langle$#1$\rangle\!\rangle$}}\leavevmode}
\def\note#1{\textcolor{red}{\n@te {#1}}}
\def\tri{\mathcal{T}}
\DeclareMathOperator{\genus}{g}
\newcommand{\from}{\colon\thinspace} % As in ``f maps _from_ X _to_ Y''.
\newcommand{\isom}{\cong} % Isomorphic.
\newcommand{\homeo}{\cong} % Homeomorphic.
\theoremstyle{plain}
\newtheorem{Thm}{Theorem}
\newtheorem{Coro}[Thm]{Corollary}
\newtheorem{Que}[Thm]{Question}
\newtheorem{Con}[Thm]{Construction}
\theoremstyle{definition}
\newtheorem{Def}[Thm]{Definition}
\begin{document}
\title{Computing trisections of 4--manifolds} 
\author{Mark Bell, Joel Hass, J. Hyam Rubinstein and Stephan Tillmann}

\begin{abstract}
Algorithms that decompose a manifold into simple pieces reveal the geometric and topological structure of the manifold, showing how complicated structures are constructed from simple building blocks.
This note describes a way to algorithmically construct a trisection, which describes a $4$--dimensional manifold as a union of three $4$--dimensional handlebodies.
The complexity of the $4$--manifold is captured in a collection of curves on a surface, which guide the gluing of the handelbodies.
The algorithm begins with a description of a manifold as a union of pentachora, or $4$--dimensional simplices.
It transforms this description into a trisection. This results in the first explicit complexity bounds for the trisection genus of a $4$--manifold in terms of the number of pentachora ($4$--simplices) in a triangulation.
\end{abstract}

\primaryclass{57M99}
% MSC 2010:
% 57M99: Manifolds and cell complexes -- None of the above, but in this section
% 57N10: Topology of general 3-manifolds
% 57M20: Two-dimensional complexes 
% 57N35: Embeddings and immersions

\keywords{$4$--manifold $|$ trisection $|$ triangulation $|$ tricoloring}

\maketitle

%%%%%%%%%%%%%%%%%%%%%%%%%%%

\vfill

\section{Introduction}

A guiding principle in low-dimensional topology is to find practical algorithms to describe topological or geometric structures on manifolds and to compute invariants, as well as to determine explicit complexity bounds for these algorithms.
The steps in an algorithm reveal the structure of a manifold and the complexity bounds relate the relative difficulty of a wide variety of problems.

Gay and Kirby~\cite{GK} introduced the concept of a \emph{trisection} for arbitrary smooth, oriented closed $4$--manifolds.
This paper is a first step towards a computational theory for understanding $4$--manifolds via trisections.
We use singular triangulations to give a description of a $4$--manifold.
These are well established as a data structure for algorithmic $3$--manifold theory \cite{Thompson-thin-1994, Jaco-0-efficient-2003, Frigerio-constructing-2004, Dunfield-spanning-2011, Li-Heegaard-2011, Schleimer-NP-2011, Coward-reidemeister-2014, Hoffman-verified-2016, Burton-courcelle-2017} and have shown promise for analyzing manifolds in higher dimensions \cite{Budney-Cappell-2012, Casali-classifying-2016, RT-even}.
Budney and Burton~\cite{4d-census} have a census of $4$--manifold triangulations with up to six pentachora.
This is a rich source of examples, and further study or extension of this census requires algorithmic tools.

We develop a theory of colorings for $4$--manifold triangulations, starting with a basic notion of \emph{tricoloring} that encodes suitable maps to the $2$--simplex, and enhancing this to \emph{c-tricoloring} with appropriate connectivity properties and \emph{ts-tricoloring} which completely encodes a trisection.

In dimensions $\le 4$, there is a bijective correspondence between isotopy classes of smooth and piecewise linear structures~\cite{C1, C2}.
All manifolds are assumed to be piecewise linear (PL) in this paper unless stated otherwise.
Our definition and results apply to any compact smooth manifold by passing to its unique piecewise linear structure \cite{Whitehead1940}.

\newpage

\begin{Def}[(Trisection of closed manifold)]\label{def:multisection}
Let $M$ be a closed, connected, piecewise linear $4$--manifold.
A \emph{trisection} of $M$ is a collection of three PL submanifolds $H_0, H_1, H_2 \subset M$, subject to the following four conditions:
\begin{enumerate}
\setlength\itemsep{0em}
\item  Each $H_i$  is PL homeomorphic to a standard PL $4$--dimensional $1$--handlebody of genus $g_i$.
\item The handlebodies $H_i$ have pairwise disjoint interior, and $M = \bigcup_i H_i$.
\item The intersection $H_{i} \cap H_{j}$ of any two of the handlebodies is a $3$--dimensional $1$--handlebody.
\item The common intersection $\Sigma = H_{0} \cap H_{1} \cap H_{2}$ of all three handlebodies is a closed, connected surface,  the \emph{central surface}.
\end{enumerate}
\end{Def}
The submanifolds   $H_{ij} = H_{i} \cap H_{j}$  and $\Sigma$ are referred to as the \emph{trisection submanifolds}.
In our illustrations, we use the colors blue, red, and green instead of $0$, $1$, and $2$ and we will refer to $H_{\textrm{blue}\thinspace \textrm{red}} = H_{br}$ as the \emph{green} submanifold and so on.

\begin{figure}[h]
\centering
\includegraphics[width=5cm]{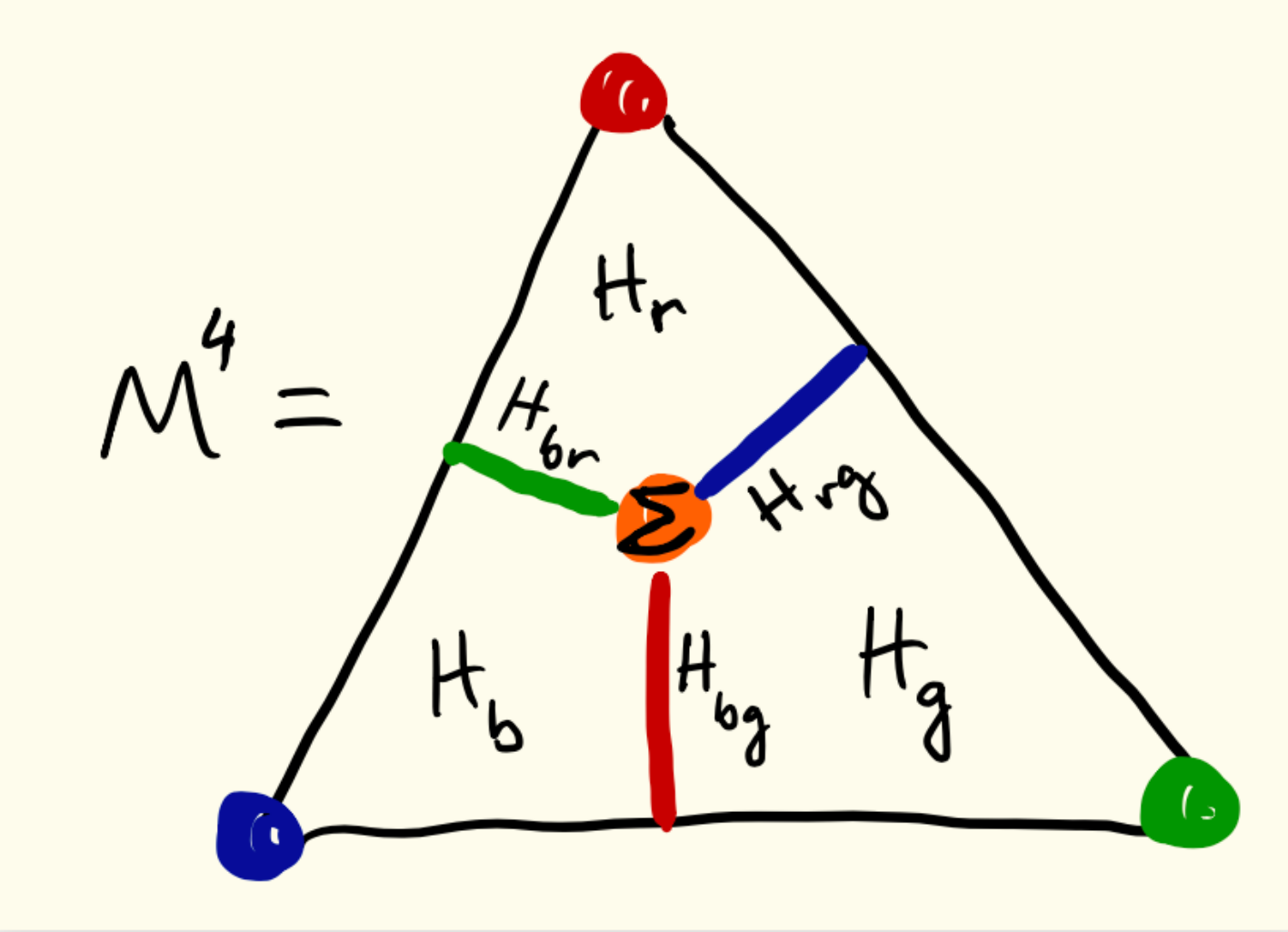}
\caption{Cartoon of a trisection.}
\label{fig:cartoon}
\end{figure}

The above definition is somewhat more general than the one originally given by Gay and Kirby~\cite{GK} in that they ask for the trisection to be \emph{balanced} in the sense that each handlebody $H_i$ has the same genus.
It was noted in \cite{RT2016} that any unbalanced trisection can be stabilized to a balanced one.
A representation of a trisection, dropping down two dimensions, is shown in Figure~\ref{fig:cartoon}.
This representation completely encapsulates our approach: We wish to define maps from $4$--manifolds to the $2$--simplex such that the \emph{dual cubical structure} of the $2$--simplex pulls back to trisections of the $4$--manifolds.

We use \emph{singular triangulations} to give a concrete description of a $4$--manifold $M$.
To induce a trisection of $M$ we use maps from $M$ to the standard $2$--simplex $\Delta^2$ that are induced by what we call \emph{tricolorings}.
The aim of this note is to describe an algorithm to compute a \emph{trisection diagram} on the central surface given an arbitrary singular triangulation of $M$, and to obtain complexity bounds on this description in terms of the size of the input triangulation.
The definitions are motivated by the example given in the next section and illustrated in Figure~\ref{fig:CP2}.

%%%%%%%%%%%%%%%%%%%%%%%%%%%

\section{Example}

Consider the moment map from the complex projective plane to the standard $2$--dimensional simplex, $\mu \from \mathbb{C}P^2 \to \Delta^2 \subset \mathbb{R}^3$ defined by
\[[\;z_0\; : \;z_1\; :\; z_2\;] \;\mapsto\; \frac{1}{\sum|z_k|}\;(\;|z_0|\;,\; |z_1|\;,\; |z_2|\;).\]
The \emph{dual spine} $\Pi^2$ in $\Delta^2$ is the subcomplex of the first barycentric subdivision of $\Delta^2$ spanned by the $0$--skeleton of the first barycentric subdivision minus the $0$--skeleton of $\Delta^2$.
Decomposing along $\Pi^2$ gives $\Delta^2$ a natural \emph{dual cubical structure} with three $2$--cubes, and the lower-dimensional cubes that we will focus on are the intersections of non-empty collections of these top-dimensional cubes, consisting of three \emph{interior $1$--cubes} and one \emph{interior $0$--cube}.
The cubical structure is indicated in Figure~\ref{fig:cartoon}, where the interior cubes are labeled with the trisection submanifolds.

\begin{figure}[ht]
\centering
\includegraphics[width=12cm]{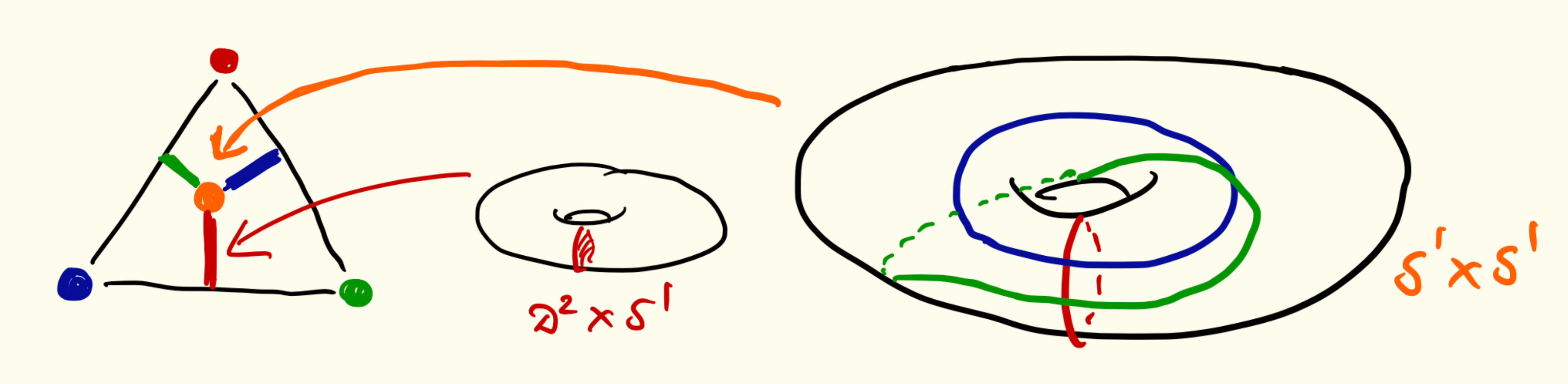}
\caption{Trisection diagram for $\mathbb{C}P^2$.}
\label{fig:CP2}
\end{figure}

Under the moment map, the $2$--cubes pull back to $4$--balls
\[\{\; [\;z_0\; : \;z_1\; :\; z_2\;] \;\mid\; z_i = 1, |z_j| \le 1, |z_k|\le 1\;\};\]
the interior $1$--cubes pull back to solid tori $S^1 \times D^2$ defined by
\[\{\; [\;z_0\; : \;z_1\; :\; z_2\;] \;\mid\; z_i = 1, |z_j| = 1, |z_k|\le 1\;\};\]
and the interior $0$--cube pulls back to a $2$--torus $\Sigma = S^1 \times S^1$ defined by
\[\{\; [\;z_0\; : \;z_1\; :\; z_2\;] \;\mid\; z_0 = 1, |z_1| = 1, |z_2| = 1\;\}.\]
The central surface is thus a Heegaard surface for the $3$--sphere boundary of each $4$--ball.
This shows that the cubical structure pulls back to a trisection with central surface a torus.
This is shown schematically in Figure~\ref{fig:CP2}.

Note that the midpoint of each edge of the $2$--simplex pulls back to a circle defined by
\[\{\; [\;z_0\; : \;z_1\; :\; z_2\;] \;\mid\; z_i = 0, z_j = 1, |z_k| = 1\;\}.\]
This is the core circle of the corresponding solid torus.
Each vertex of the $2$--simplex $\Delta^2$ pulls back to a singleton
\[\{\; [\;z_0\; : \;z_1\; :\; z_2\;] \;\mid\; z_i = 0, z_j = 0, z_k = 1\;\}.\]

There is more information in this picture.
The central surface is the pre-image of the barycenter of $\Delta^2$, and each solid torus is the preimage of the line segment joining this to the barycenter of a facet of $\Delta^2$.
This identifies the boundary curves of the three meridian discs.
Any two of these three curves give a Heegaard diagram for a $3$--sphere, and the union of all three curves is termed a \emph{trisection diagram} by Gay and Kirby~\cite{GK}.

%%%%%%%%%%%%%%%%%%%%%%%%%%%

\section{Constructing trisection diagrams}

In this section we define three notions of a tricoloring and describe an algorithm to compute trisections and trisection diagrams based on these colorings.
We will see that these colorings are readily constructed on triangulated $4$--manifolds.

\subsection{Singular triangulation}

Let $\widetilde{\Delta}$ be a finite union of pairwise disjoint, oriented Euclidean $4$--simplices with the standard simplicial structure.
Every $k$--simplex $\tau$ in $\widetilde{\Delta}$ is contained in a unique $4$--simplex $\sigma_\tau$.
A $3$--simplex in $\widetilde{\Delta}$ is termed a \emph{facet} and a $0$--simplex a \emph{vertex}.

Let $\Phi$ be a family of orientation-reversing affine isomorphisms pairing the facets in $\widetilde{\Delta}$, with the properties that $\varphi \in \Phi$ if and only if $\varphi^{-1} \in \Phi$, and every facet is the domain of a unique element of $\Phi$.
The elements of $\Phi$ are termed \emph{face pairings}.

We denote $\tri = (\widetilde{\Delta}, \Phi)$.
Any operation $O$ of simplicial topology that is performed on $\widetilde{\Delta}$ (such as barycentric subdivision, regular neighborhoods, and so on) is said to be an operation on $\tri$ so long as it respects the face pairings.
The set of all face pairings $\Phi$ determines a natural equivalence relation on the set of all $k$--simplices in $\widetilde{\Delta}$, and we will term the equivalence classes the \emph{(singular) $k$--simplices} of $\tri$.
This terminology is natural when passing to the quotient space $|\tri| = \widetilde{\Delta} / \Phi$ with the quotient topology.
The space $|\tri|$ is a closed, orientable $4$--dimensional pseudo-manifold, and the quotient map is denoted $p \from \widetilde{\Delta} \to |\tri|$.
The set of non-manifold points of $|\tri|$, if any, is contained in the $1$--skeleton.
(See Seifert--Threfall~\cite{SeiThr}.)

A \emph{singular triangulation} of a $4$--manifold $M$ is a PL homeomorphism $|\tri|\to M$, where $|\tri|$ is obtained as above.
The triangulation is \emph{simplicial} if $p \from \widetilde{\Delta} \to |\tri|$ is injective on each simplex.
The triangulation is \emph{PL} if, in addition, the link of every simplex is PL homeomorphic to a standard sphere.

\begin{figure*}%[tbhp]
\centering
\includegraphics[width=.8\linewidth]{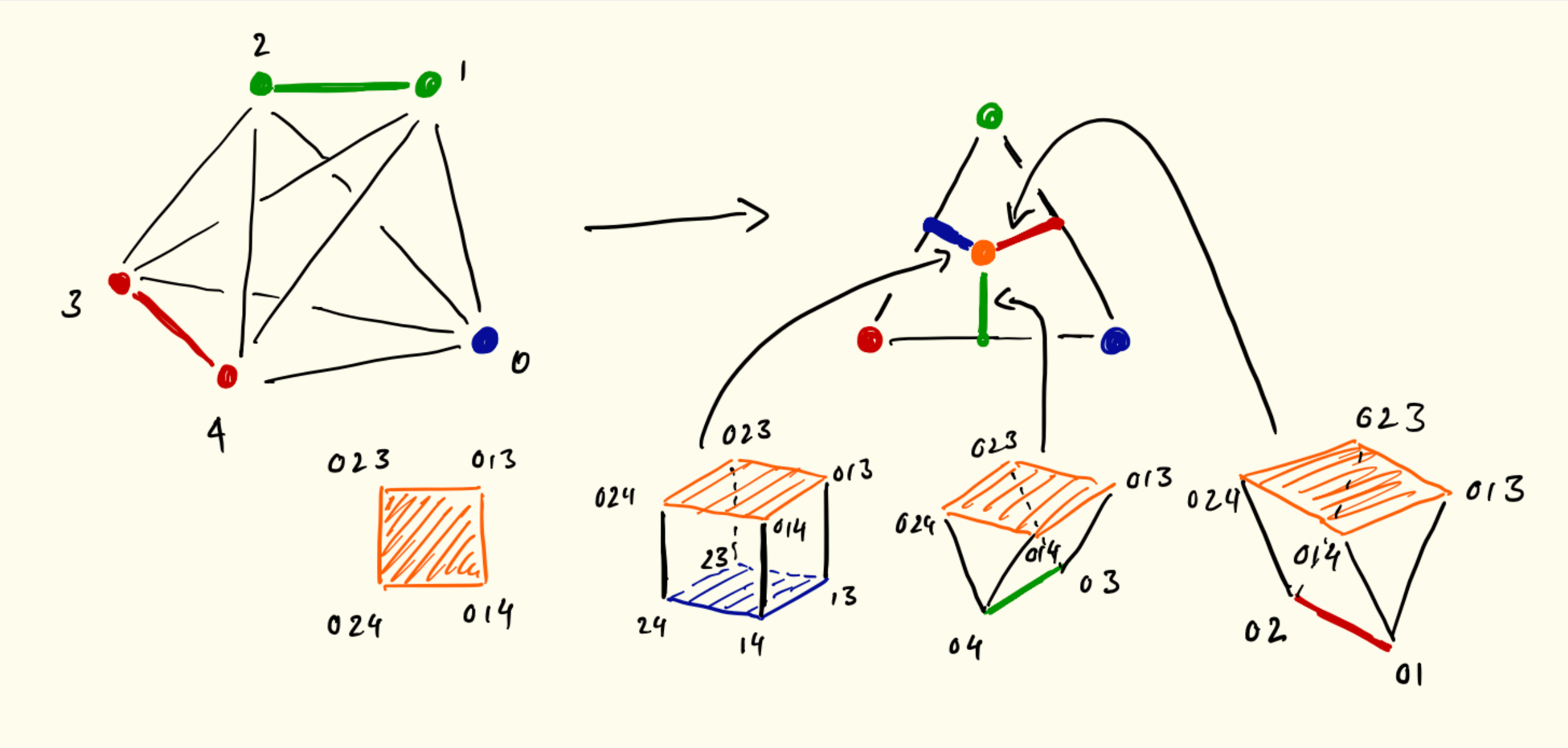}
\caption{Pieces of the trisection submanifolds. The vertices of the pieces are barycenters of faces and labeled with the corresponding vertex labels. The central surface meets the pentachoron in a square.
Two of the $3$--dimensional trisection submanifolds meet the pentachoron in triangular prisms and the third (corresponding to the singleton) meets it in a cube. Moreover, any two of these meet in the square of the central surface.}
\label{fig:pieces}
\end{figure*}

\subsection{Tricolorings}

Let $M$ be a closed, connected $4$--manifold with (possibly singular) triangulation $|\tri| \to M$.
A partition $\{P_0, P_1, P_2\}$ of the set of all vertices of $\tri$ is a \emph{tricoloring} if every $4$--simplex meets two of the partition sets in two vertices and the remaining partition set in a single vertex.
In this case, we also say that the triangulation is \emph{tricolored}.

Denote the vertices of the standard $2$--simplex $\Delta^2$ by $v_0$, $v_1$, and $v_2$.
A tricoloring determines a natural map $\mu \from M \to \Delta^2$ by sending the vertices in $P_k$ to $v_k$ and extending this map linearly over each simplex.
Note that the pre-image of $v_k$ is a graph $\Gamma_k$ in the $1$--skeleton spanned by the vertices in $P_k$.

As in the example of the complex projective plane, we would like to use $\mu$ to pull back the dual cubical structure of the simplex to a trisection of $M$.
The preimages of the dual cubes have very simple combinatorics.
The barycenter of $\Delta^2$ pulls back to exactly one $2$--cube in each pentachoron of $M$, and these glue together to form a surface $\Sigma$ in $M$.
This surface is the common boundary of each of the three $3$--manifolds obtained as preimages of an interior $1$--cube (edge) of $\Delta^2$.
Each such $3$--manifold is made up of cubes and triangular prisms, as in Figure~\ref{fig:pieces}.
Each interior $1$--cube $c$ has boundary the union of the barycenter of $\Delta^2$ and the barycenter $b$ of an edge of $\Delta^2$.
Since the map $\mu \from M \to \Delta^2$ is linear on each simplex, the preimage $\mu^{-1}(c)$ collapses to the preimage $\mu^{-1}(b)$.
In particular, each $3$--manifold has a spine made up of $1$--cubes and $2$--cubes.

Recall that a compact subpolyhedron $P$ in the interior of a manifold $M$ is called a (PL) spine of $M$ if $M$ collapses to $P.$ If $P$ is a spine of $M$, then $M \setminus P$ is PL homeomorphic with $\partial M \times [0,1).$ To see that the above construction gives a trisection, it suffices to show that:
\begin{enumerate}
\setlength\itemsep{0em}
\item the graph $\Gamma_k$ is connected for each $k$; and
\item the preimage of an interior $1$--cube of $\Delta^2$ has a $1$--dimensional spine.
\end{enumerate}

These conditions will be verified in the proof of the correctness of Construction~\ref{con:modify pseudo-tricoloring} below.
The first condition ensures that the preimage of each $2$--cube of $\Delta^2$ is a connected $4$--dimensional 1--handlebody. In particular, it has connected boundary. 
The second condition guarantees that the preimage under $\mu$ of each interior $1$--cube of $\Delta^2$ is a $3$--dimensional handlebody, hence also has connected boundary. Since the boundary of the $4$--dimensional 1--handlebody is the union of such handlebodies, this implies that the central surface is connected.

We say that a tricoloring is a \emph{c-tricoloring} if $\Gamma_k$ is connected for each $k$ and that a c-tricoloring is a \emph{ts-tricoloring} if the preimage of each interior $1$--cube collapses onto a $1$--dimensional spine.
In this case, the dual cubical structure of $\Delta^2$ pulls back to a trisection of $M$.

For example, the standard $4$--sphere $S^4$ can be thought of as a doubled $4$--simplex, giving it a singular triangulation with two $4$--simplices and five vertices, denoted $v_0, \ldots, v_4$.
Letting $P_0 = \{v_0, v_1\}$, $P_1 = \{v_2, v_3\}$, and $P_2 = \{v_4\}$ gives a ts-tricolored triangulation of $S^4$ with each of $\Gamma_0$ and $\Gamma_1$ a $1$--simplex and $\Gamma_2$ a $0$--simplex.

Examples of c-tricolorings, which are not ts-tricolorings, can be constructed so that the preimage of the dual cubical structure of $\Delta^2$ is given by gluing together three $4$--dimensional handlebodies $H_i$, so that $H_1 \cap H_2 \cap H_3$ is not a Heegaard surface for each $H_i \cap H_j$.

\subsection{Existence of tricolorings}

In general, given an arbitrary triangulated $4$--manifold $M$, one can always obtain a tricolored triangulation by passing to the first barycentric subdivision.
This has a natural partition of the vertices into five sets $B_i$ (the barycenters of the $k$--simplices for $0 \le k \le 4$).
Any coarsening of this partition of the form $\{B_i \cup B_j, B_k \cup B_l, B_m\}$ now gives a tricoloring.
For instance, the partition $\{B_0 \cup B_1, B_2 \cup B_3, B_4\}$ was used in \cite{RT2016}.
While conceptually simple, this process multiplies the number of $4$--simplices by a factor of $120$.
We now give an improved construction.

\begin{figure*}[t]
\centering
\includegraphics[width=.8\linewidth]{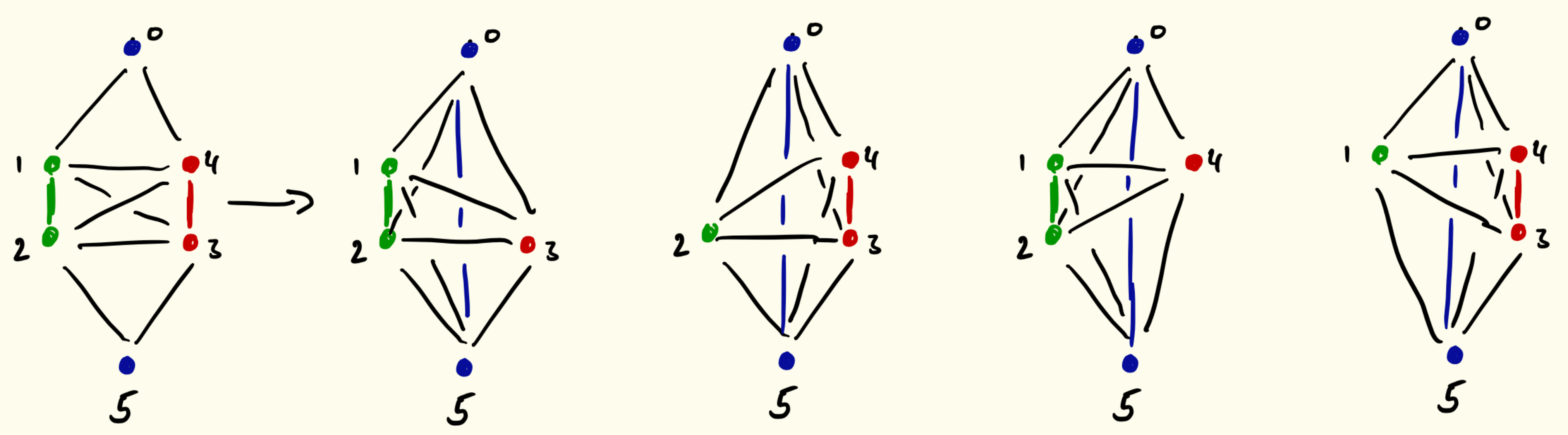}

\vspace{0.3cm}
\includegraphics[height=.12\linewidth]{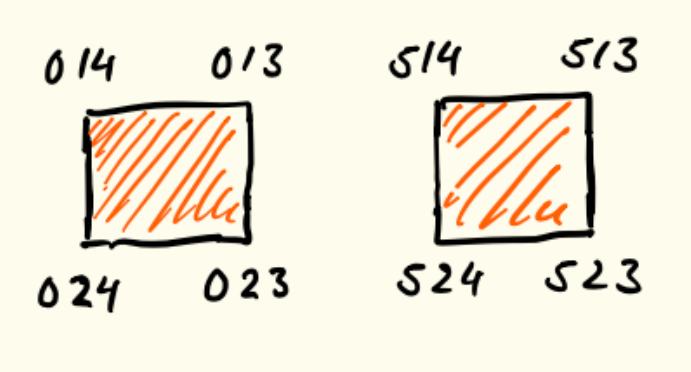}
\hspace{0.2cm}
\includegraphics[height=.12\linewidth]{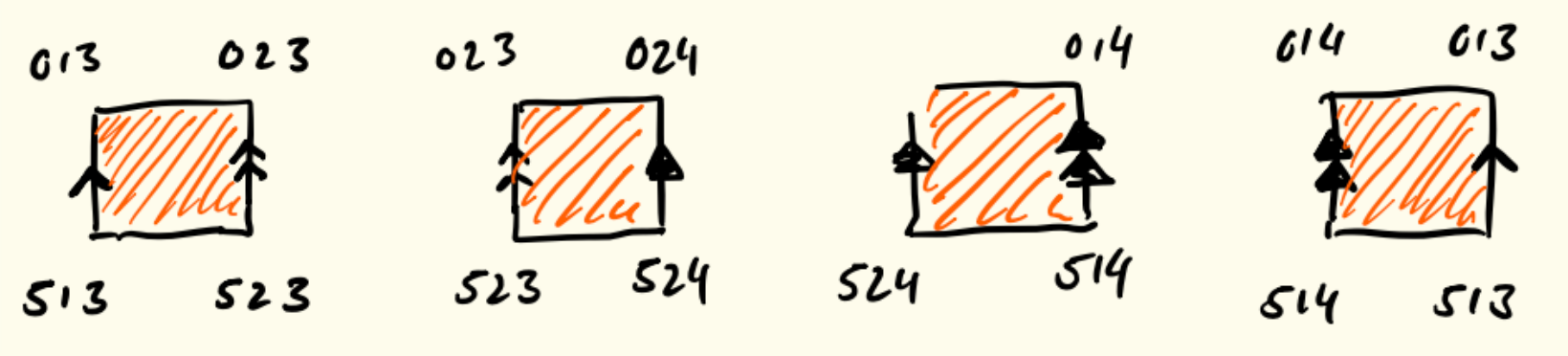}
\caption{Converting a double-pentachoron to a quadra-pentachoron. Shown are also the squares of the central submanifold --- the move replaces two disjoint discs with an annulus, thus adding a handle to the central surface. The remaining figures are base on this; the reader should bear in mind that there are two other kinds of pentachoron corresponding to permutations of the colors.}
\label{fig:2-4-move}
\end{figure*}

\begin{Con}
\label{con:construct pseudo-tricoloring}
Given an arbitrary triangulation of the closed $4$--manifold $M$ having $n$ pentachora, there is a triangulation with $60n$ pentachora that admits a tricoloring.
Moreover two of the graphs $\Gamma_k$ are connected and the third consists of isolated vertices.
\end{Con}

\begin{proof}
Let $|\tri| \to M$ be a (possibly singular) triangulation of $M$.
Each $4$--simplex $\sigma$ in $\tri$ has a natural subdivision into sixty $4$--simplices with set of vertices consisting of its $0$--simplices together with its barycenter and all barycenters of its $2$--simplices and $3$--simplices.
This can be built by first applying a 1--5 bistellar move to $\sigma$.
Each new pentachoron $\sigma' < \sigma$ has a unique $3$--face that corresponds to a $3$--simplex in $\sigma$.
Perform a 1--4 move on this $3$--simplex and cone this to a triangulation of $\sigma'$ consisting of four $4$--simplices.
Each of the resulting $4$--simplices $\sigma''$ contains a unique $2$--simplex that corresponds to a $2$--simplex in $\sigma$.
Perform a 1--3 move on this $2$--simplex and cone this to a triangulation of $\sigma''$.

For each $4$--simplex $\sigma$ in the resulting triangulation there is a flag $\sigma^1 < \sigma^2 < \sigma^3 < \sigma^4$ of simplices    such  that the vertices of $\sigma$ consist of the vertices of $\sigma^1$ and the barycenters of $\sigma^2$, $\sigma^3$, and $\sigma^4$.
The tricoloring is now obtained by placing the vertices of $\sigma^1$ in the set $P_0$, the barycenter of $\sigma^2$ in the set $P_1$ and the barycenters of $\sigma^3$ and $\sigma^4$ in the set $P_2$.
This gives a tricoloring with $60n$ pentachora.

It is easy to see that $\Gamma_0, \Gamma_2$ are connected, as they are   the    $1$--skeleton and dual $1$--skeleton of the original triangulation.
Furthermore $\Gamma_1$ consists of isolated vertices since each pentachoron has exactly one vertex in the set $P_1$.
\end{proof}

\subsection{From tricolorings to ts-tricolorings}

We now show that given any tricolored triangulation of $M$, there is a simple procedure that transforms it into a ts-tricolored triangulation of $M$.

\begin{Con}
\label{con:modify pseudo-tricoloring}
Given a tricolored triangulation of the closed $4$--manifold $M$ with $n$ pentachora, there is a ts-tricolored triangulation with $2n$ pentachora.
\end{Con}

\begin{proof}
In a tricolored triangulation, each pentachoron $\sigma$ has a unique facet $\tau$ that meets only two of the partition sets.
There is a unique pentachoron $\sigma'$ meeting $\sigma$ in $\tau$, and the two together form
 a \emph{double-pentachoron}, $\sigma \cup_\tau \sigma'$.
The manifold $M$ thus has a decomposition into double-pentachora\footnote{To the knowledge of the authors, this elementary fact has not been observed previously.}.
There are three types of double-pentachora, classified by the isolated vertices.
Shown in Figure~\ref{fig:2-4-move} is a double-pentachoron with vertices numbered $1$ to $4$ in $\tau$ (drawn in green and red) and $0$ and $5$ not in $\tau$ (drawn in blue).
The three colors correspond to the partition sets $P_k$, and vertices of the same color may be identified in $M$.
Throughout this proof, we may without loss of generality refer to this labeled double-pentachoron.
The other two types arise by permuting the three colors.

We focus on one of the partition sets, $P_k$.
The graph $\Gamma_k$ meets a double-pentachoron either in a single edge or in two isolated vertices.
Perform a 2--4 move on each of the double pyramids meeting $\Gamma_k$ in two isolated vertices.
This gives a new triangulation $\tri'$ and a new graph $\Gamma'_k$.
Each pentachoron in $\tri'$ meets $\Gamma'_k$ in an edge, and hence the graph $\Gamma'_k$ is connected.
To see that $\Gamma'_k$ is connected, choose any two vertices in $P_k$, contained in two pentachora of the triangulation.
A path in the dual $1$--skeleton between barycenters of these pentachora can be deformed into $\Gamma'_k$ after the 2--4 moves, showing that this graph is indeed connected.

Notice that $\Gamma'_k$ is obtained from $\Gamma_k$ by adding one edge for each double pyramid on which a 2--4 move was performed.
See Figure~\ref{fig:2-4-move}, where vertices in $P_k$ are drawn in blue.
This does not affect any of the other monochromatic graphs.
Since this can be done independently for each $k$ (adding edges does not change the other graphs), this shows that after doing all 2--4 moves, we have a c-tricoloring.

We claim that in fact we also have a ts-tricoloring after performing all 2--4 moves.
This has to do with special properties of the degree four edges obtained in doing these moves.
Let us introduce some additional terminology that will be useful.
A 2--4 move performed on a double-pentachoron gives a \emph{quadra-pentachoron}; that is, a collection of four pentachora meeting in a common  $1$--simplex   contained in no other pentachora, and with a particular coloring having two vertices of each color.
This structure of the quadra-pentachora is crucial to our constructions and proofs.

Let $Q$ be a collection of four pentachora forming one of these quadra-pentachora.
The boundary of $Q$ consists of eight tetrahedra and $M$ is tiled by disjoint collections of these quadra--pentachora, meeting along common tetrahedral faces.
After the collection of 2--4 moves:
\begin{enumerate}
\setlength\itemsep{0em}
\item The monochromatic subgraph of the $1$--skeleton $\Gamma_k$ is connected for each $k$.
\item $\Sigma \cap Q$ is an annulus formed from squares, one square in each of the four pentachora of $Q$.
The boundary of this annulus consists of eight edges lying in eight boundary tetrahedra of $Q$, with eight vertices lying in eight boundary $2$--simplices of $Q$.
These combine to give a decomposition of $\Sigma$ into annuli.
\item For each $k$ and $3$--dimensional trisection submanifold $H_{ij}$, the intersection $H_{ij} \cap Q$ consists of one of two types of polyhedral structures.

The first polyhedral structure is a $3$--ball $B \subset Q$ whose boundary is tiled by eight square and four triangular faces.
Four of the square faces form an annulus that lies on $\Sigma$ and the others form a pair of disks lying in $\partial Q$.
There is a  \ collapse of $B$ to a $1$--dimensional spine (an ``H'') and on $\partial Q$ this collapse agrees with those defined on adjacent quadra--pentachora.
This follows from the facet pairings indicated in Figure~\ref{fig:pieces-r}.
For example, consider the red square that the red submanifold $H_{bg}$  collapses  to, shown in Figure~\ref{fig:pieces-r}.
This square has two edges, $(01, 51)$ and $(02, 52)$, that lie in the interior of $Q$ and thus are not glued to any other red squares.
Thus the collapse to an ``H'' in $Q$ matches with similar collapses in adjacent quadra-pentachora.

\begin{figure}[t]
\centering{
\includegraphics[width=10cm]{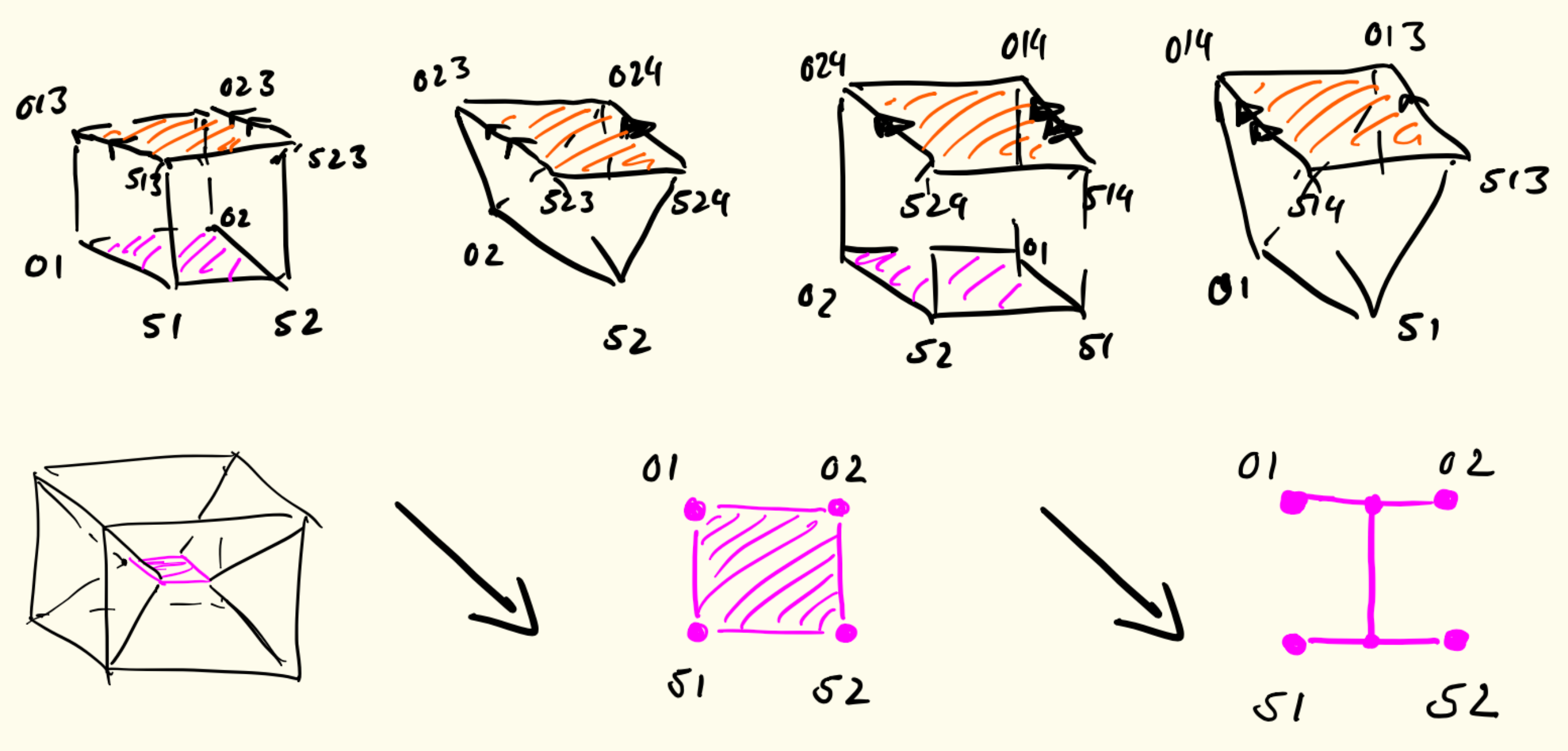}

\vspace{0.2cm}

\includegraphics[width=10cm]{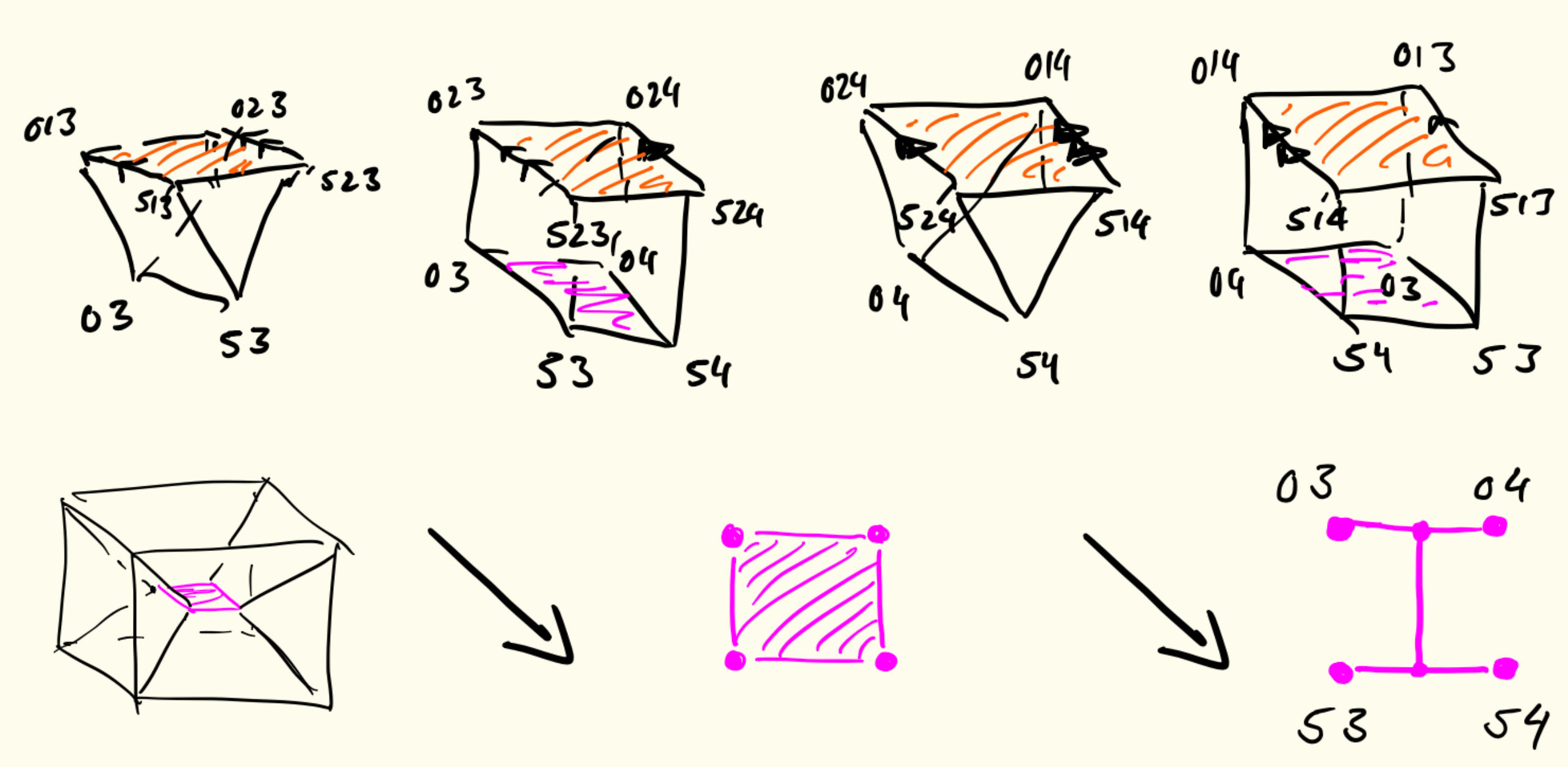}
}
\caption{Red and green submanifold (based on Fig.~\ref{fig:2-4-move}). Blocks that form pieces of trisection submanifolds. The vertices of the blocks are barycenters of faces of the triangulation, labeled with the corresponding vertex labels. The picture for the red and green submanifolds are analogous with the notable difference that the green submanifold meets the pentachora in cubes (resp. prisms) that the red submanifold meets in prisms (resp. cubes).}
\label{fig:pieces-r}
\end{figure}

The second polyhedral structure is a solid torus $T \homeo S^1 \times D^2$ whose boundary is tiled by twelve square faces.
The boundary of $T$ intersects $\partial Q$ in eight of these square faces, two in each pentachoron of $Q$.
This solid torus   collapses  to a curve consisting of four line segments, and on $\partial Q$ this collapse agrees with those on adjacent quadra-pentachora.
See Figure~\ref{fig:pieces-b}.

\begin{figure}[t]
\centering
\includegraphics[width=10cm]{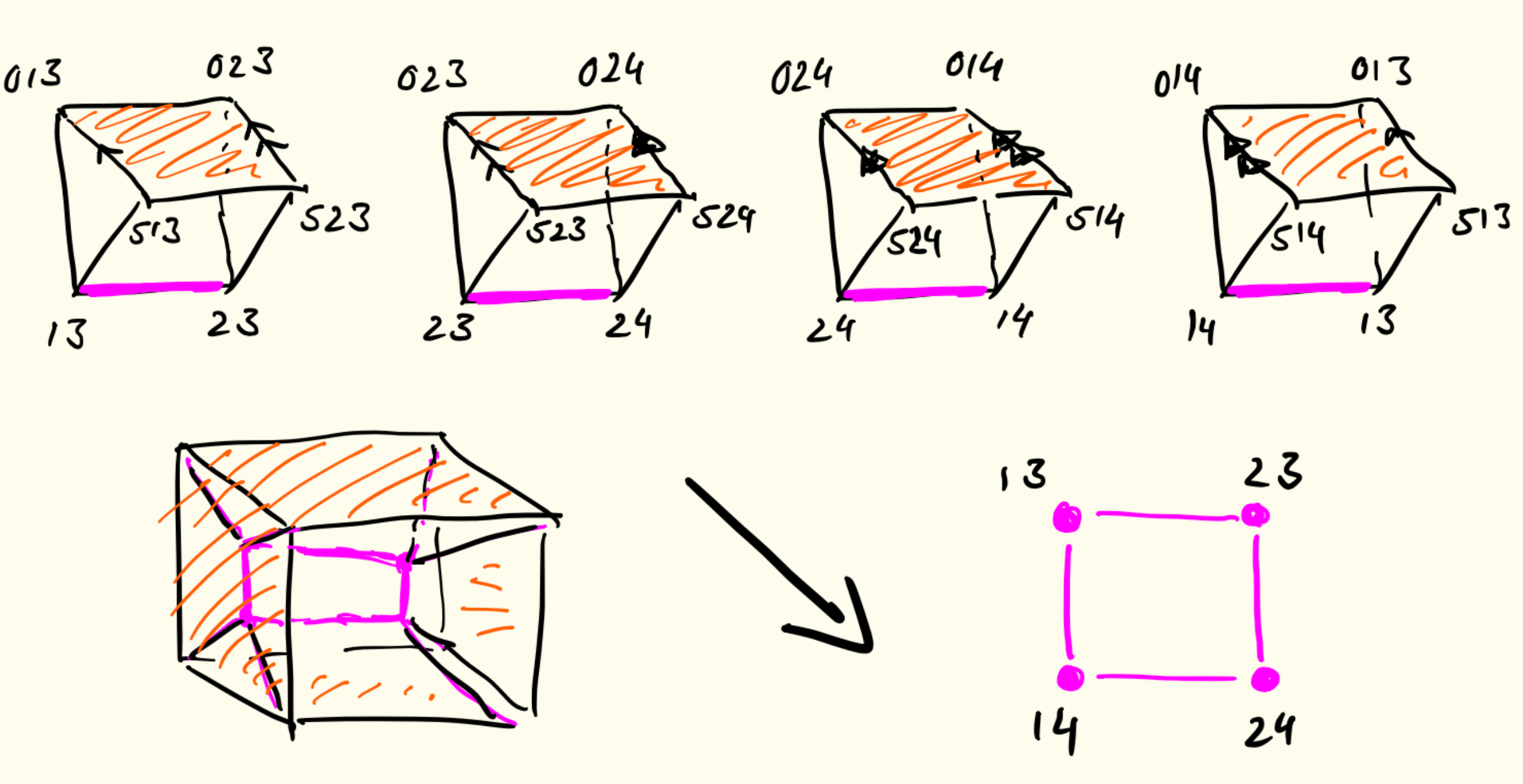}
\caption{Blue submanifold (based on Fig.~\ref{fig:2-4-move}). Blocks that form pieces of trisection submanifolds. The vertices of the blocks are barycenters of faces of the triangulation, labeled with the corresponding vertex labels.}
\label{fig:pieces-b}
\end{figure}

\item $X_k \cap Q$ is a $4$--ball that   collapse  to $ \Gamma'_k \cap Q$ and restricts on $\partial Q$ to a   collapse  of $X_k \cap \partial Q$ that agrees with those on adjacent quadra--pentachora.
\end{enumerate}

This discussion shows that each pairwise intersection of $4$--dimensional handlebodies is indeed a $3$--dimensional handlebody, since it has a $1$--dimensional spine, and hence completes the proof.
\end{proof}

Combining Constructions~\ref{con:construct pseudo-tricoloring} and \ref{con:modify pseudo-tricoloring} gives:

\begin{Thm}
\label{thm:ts-coloring}
Given an arbitrary triangulation of the closed orientable $4$--manifold $M$ having $n$ pentachora, there is a triangulation with $120n$ pentachora that admits a ts-tricoloring.
\end{Thm}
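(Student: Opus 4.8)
The plan is to obtain Theorem~\ref{thm:ts-coloring} as a direct composition of Constructions~\ref{con:construct pseudo-tricoloring} and~\ref{con:modify pseudo-tricoloring}, the only work being to keep track of the pentachoron count.

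First I would apply Construction~\ref{con:construct pseudo-tricoloring} to the given triangulation of $M$ with $n$ pentachora. This produces a triangulation $\tri_1$ of the \emph{same} PL manifold $M$, now with $60n$ pentachora, equipped with a tricoloring; moreover two of the monochromatic graphs $\Gamma_k$ are connected and the third is a set of isolated vertices. The only thing to confirm at this stage is that the output is genuinely a tricolored triangulation of $M$ --- that the stellar subdivisions and bistellar moves do not change the underlying PL manifold, and that the asserted vertex partition meets every pentachoron in the pattern $2+2+1$ --- which is exactly what Construction~\ref{con:construct pseudo-tricoloring} records.

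Next I would feed $\tri_1$ into Construction~\ref{con:modify pseudo-tricoloring}. The hypothesis there is only ``a tricolored triangulation of the closed $4$--manifold $M$'', with no constraint on the tricoloring beyond being a tricoloring, so $\tri_1$ qualifies. The construction then returns a ts-tricolored triangulation $\tri_2$ of $M$ with exactly twice as many pentachora, namely $2\cdot 60n = 120n$; in particular the dual cubical structure of $\Delta^2$ pulls back along the induced map $\mu\from M\to\Delta^2$ to a trisection of $M$. Chaining the two size bounds multiplicatively yields a ts-tricolored triangulation with $120n$ pentachora, as claimed.

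I expect no genuine obstacle: all the substantive content --- connectivity of the $\Gamma_k$ after the $2$--$4$ moves, and the existence of $1$--dimensional spines for the preimages of the interior $1$--cubes, established via the quadra-pentachoron block analysis of Figures~\ref{fig:pieces-r} and~\ref{fig:pieces-b} --- has already been discharged inside the proof of Construction~\ref{con:modify pseudo-tricoloring}. The only care required is the (multiplicative, not additive) bookkeeping of the two bounds and the observation that the output of Construction~\ref{con:construct pseudo-tricoloring} satisfies the minimal hypothesis of Construction~\ref{con:modify pseudo-tricoloring}. One may also note, for sharpness of the constant along this route, that every pentachoron produced by Construction~\ref{con:construct pseudo-tricoloring} has its unique $P_1$--vertex as the apex of its double-pentachoron: in the flag $\sigma^1<\sigma^2<\sigma^3<\sigma^4$ the simplex $\sigma^1$ is an edge, so each pentachoron meets $P_0$, $P_1$, $P_2$ in $2$, $1$, $2$ vertices respectively. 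Hence Construction~\ref{con:modify pseudo-tricoloring} performs a $2$--$4$ move on every one of the $30n$ double-pentachora of $\tri_1$, and the doubling to $120n$ cannot be avoided with this particular pair of constructions.
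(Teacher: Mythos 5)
Your proof is correct and follows the same route as the paper, which simply invokes Constructions~\ref{con:construct pseudo-tricoloring} and~\ref{con:modify pseudo-tricoloring} in succession to obtain the factor $60\cdot 2 = 120$. Your extra observations --- that the output of Construction~\ref{con:construct pseudo-tricoloring} trivially satisfies the (minimal) hypothesis of Construction~\ref{con:modify pseudo-tricoloring}, and that in this composition every one of the $30n$ double-pentachora in fact receives a $2$--$4$ move since each has its two $P_1$--vertices as the isolated pair --- are accurate and make the constant transparent, but do not change the argument.
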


%%%%%%%%%%%%%%%%%%%%%%%%%%%%

\subsection{Constructing trisection diagrams}

The proof of Construction~\ref{con:modify pseudo-tricoloring} allows the construction of the compression discs of the $3$--dimensional handlebodies (and hence the trisection diagram) from a ts-tricolored triangulation with a decomposition into quadrapentachora.
The details will now be given.

The $3$--dimensional $1$--handlebodies $H_{ij}$ have a coarse decomposition into polyhedral balls and polyhedral solid tori, and a finer decomposition of each polyhedral ball into two $3$--cubes and two triangular prisms, and of each polyhedral torus into four triangular prisms.
The initial spine for each $H_{ij}$ consisted of a $2$--cube in each polyhedral ball and of a circle consisting of four $1$--cubes in each polyhedral solid torus.
It was then shown that each $2$--cube can be collapsed further to an ``H''.
In order to analyze the spine further, we use the natural simplicial subdivision of an ``H'' into five $1$--simplices.
Note that a square face of a polyhedral cube may glue to a square face of a polyhedral solid torus.
We therefore also subdivide the $1$--cubes in the polyhedral solid tori into two $1$--simplices.
This gives a consistent subdivision of the $1$--dimensional spine of $H_{ij}$ (possibly with some redundancies that can be avoided in an efficient implementation).

\begin{figure}[ht]
\centering
\includegraphics[height=3cm]{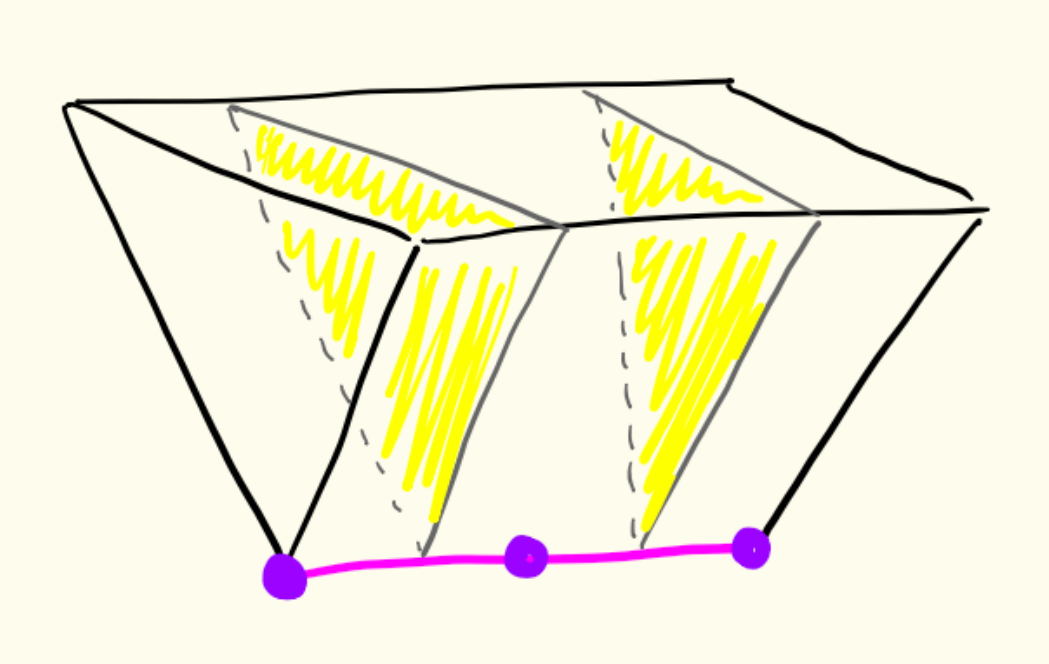}
\hspace{1cm}
\includegraphics[height=3cm]{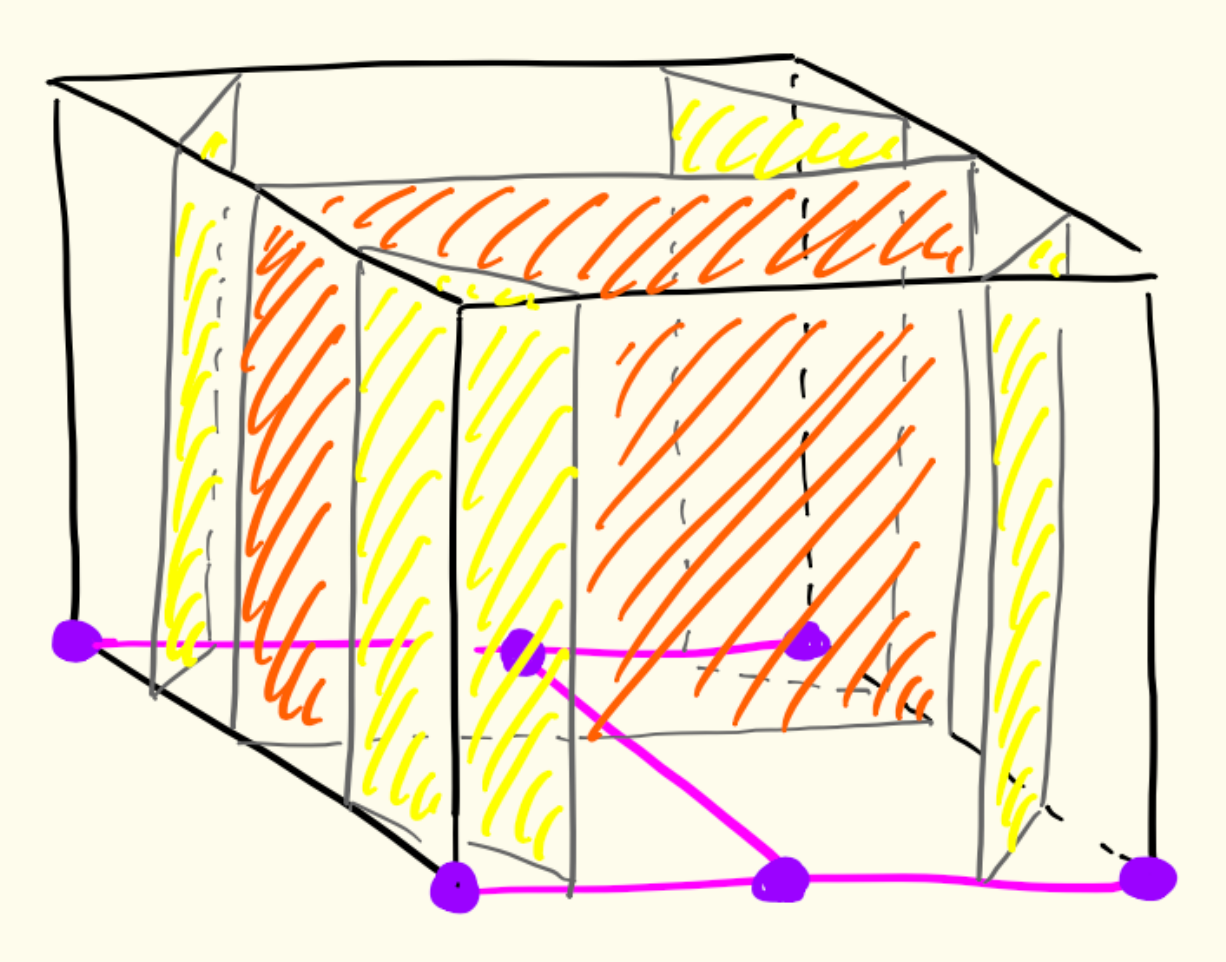}
\hspace{1cm}
\includegraphics[height=3cm]{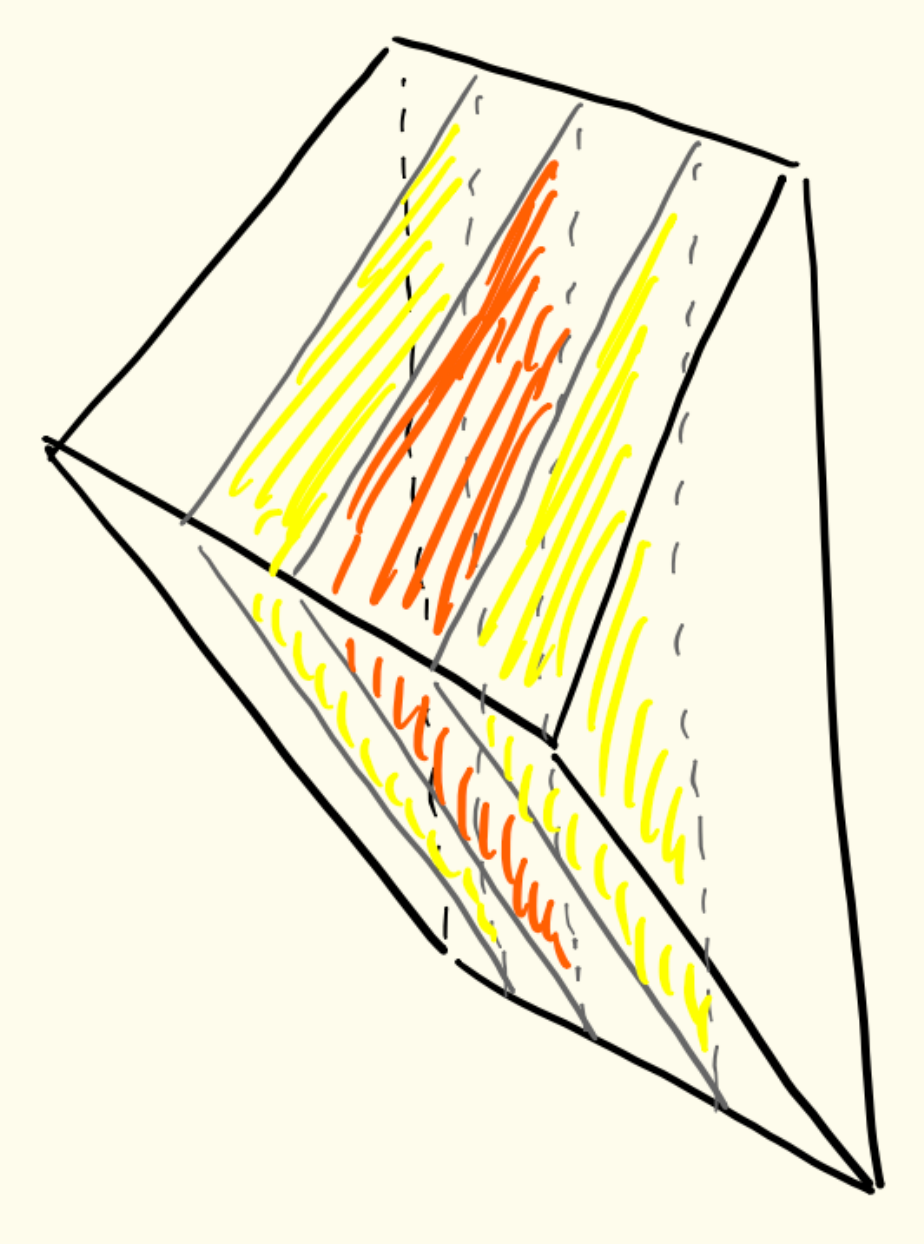}
\caption{Canonical triangles and squares in blocks; from left to right: prism in torus, cube, and prism in ball.}
\label{fig:meridian_discs_0}
\end{figure}

We claim that for each handlebody, a complete system of compression discs is constructed, by adding canonical normal squares and normal triangles as show in Figure~\ref{fig:meridian_discs_0}.
In a polyhedral solid torus, there are two normal triangles in each triangular prisms which are dual to the two $1$--simplices of the spine.
In a polyhedral solid torus, there is one central square in each cube that is dual to the internal edge of the ``H'', and each edge meeting a boundary vertex of the ``H'' has a correponding dual square.
In order to give well defined discs in the polyhedral balls, we introduce three normal triangles in each triangular prism contained in it.

The surface formed in each of the polyhedral structures is shown in Figure~\ref{fig:meridian_discs_1}.
This directly shows that the surface meeting the internal edge of the ``H'' is a disc transverse to the spine, and hence a meridian disc.

\begin{figure}[ht]
\centering
\includegraphics[height=3cm]{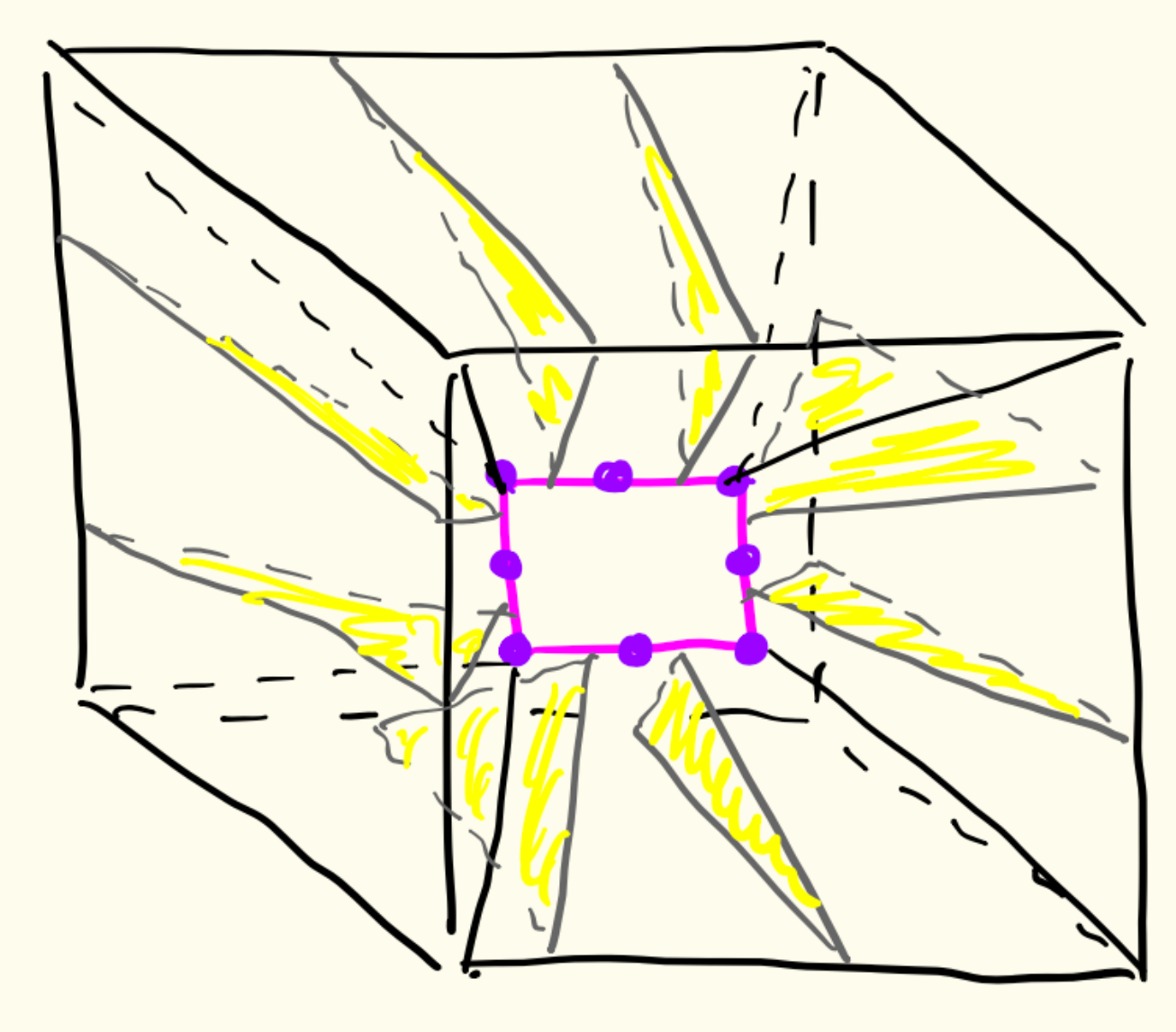}
\hspace{1cm}
\includegraphics[height=3cm]{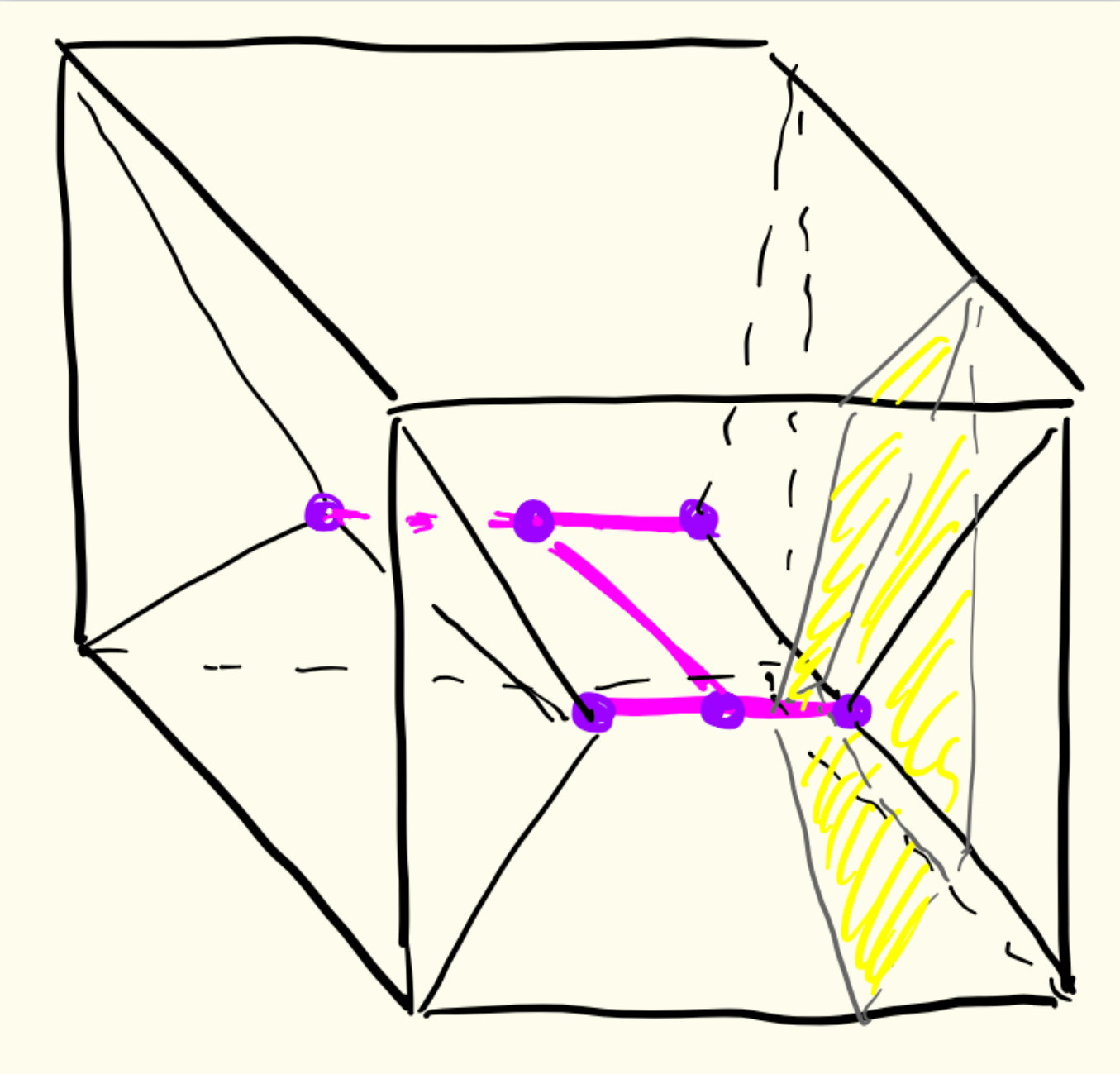}
\hspace{1cm}
\includegraphics[height=3cm]{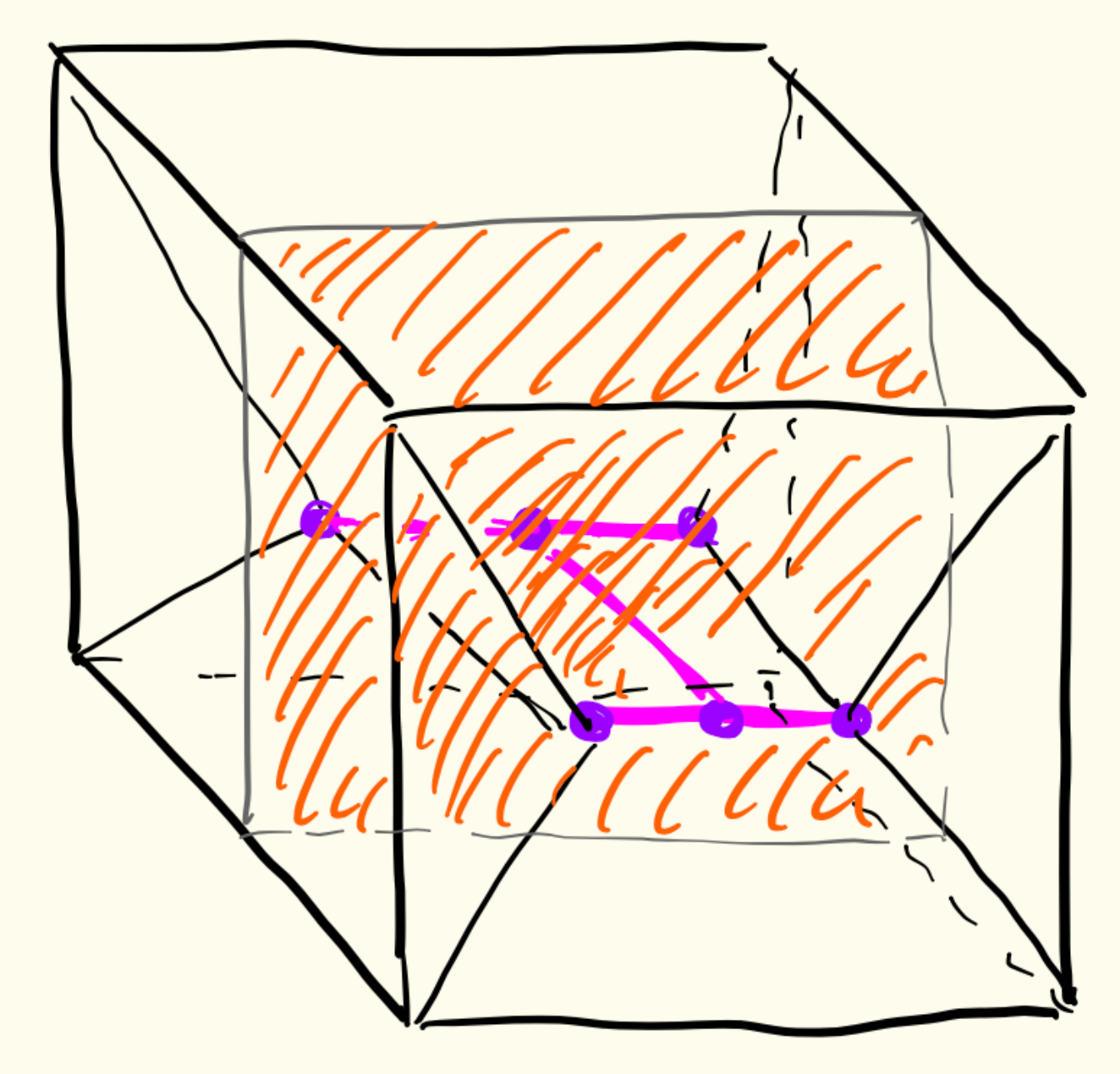}
\caption{Canonical triangles and squares in the polyhedral structures of the submanifolds: all discs in the torus structure (left); one of the four discs parallel to vertical edges in the cube structure (middle) and the central discs in the cube structure (right).}
\label{fig:meridian_discs_1}
\end{figure}

For the remaining discs, we need to show that no branching occurs along the edges of the normal triangles and quadrilaterals.
It follows from the labeling of barycenters that such an edge lies in a $3$--simplex in the triangulation, and hence meets at most two building blocks of the $3$--dimensional handlebody.
Hence the surface is properly embedded.
The claim that each component is a disc now follows from the fact that each triangle meets a unique $1$--dimensional stratum of the spine in a vertex.
Developing this surface normally to the spine around this central vertex can only give a disc.

To see the discs form a complete system (possibly with redundancies), note that each edge of the spine has a dual disc.
The central surface $\Sigma$ is decomposed along a graph into annuli.
Each annulus is made up of four squares, giving the surface a natural singular Euclidean structure.
Each such annulus is met in a single core curve and in two pairs of boundary parallel arcs by two of the three sets of meridian curves.
The remaining set meets the core curve transversely in eight essential arcs, one in each square.
This is shown in Figure~\ref{fig:diagram}.

\begin{figure}[ht]
\centering
\includegraphics[width=7cm]{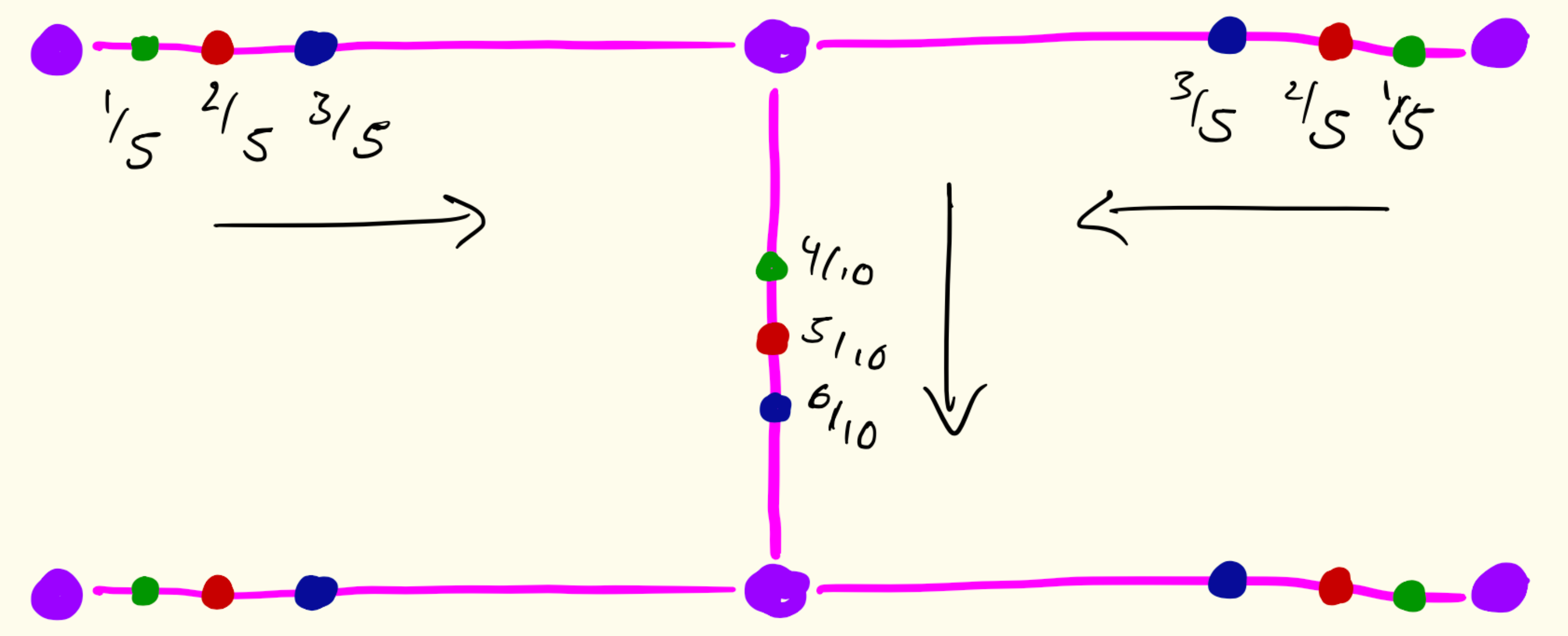}
\quad
\includegraphics[width=7cm]{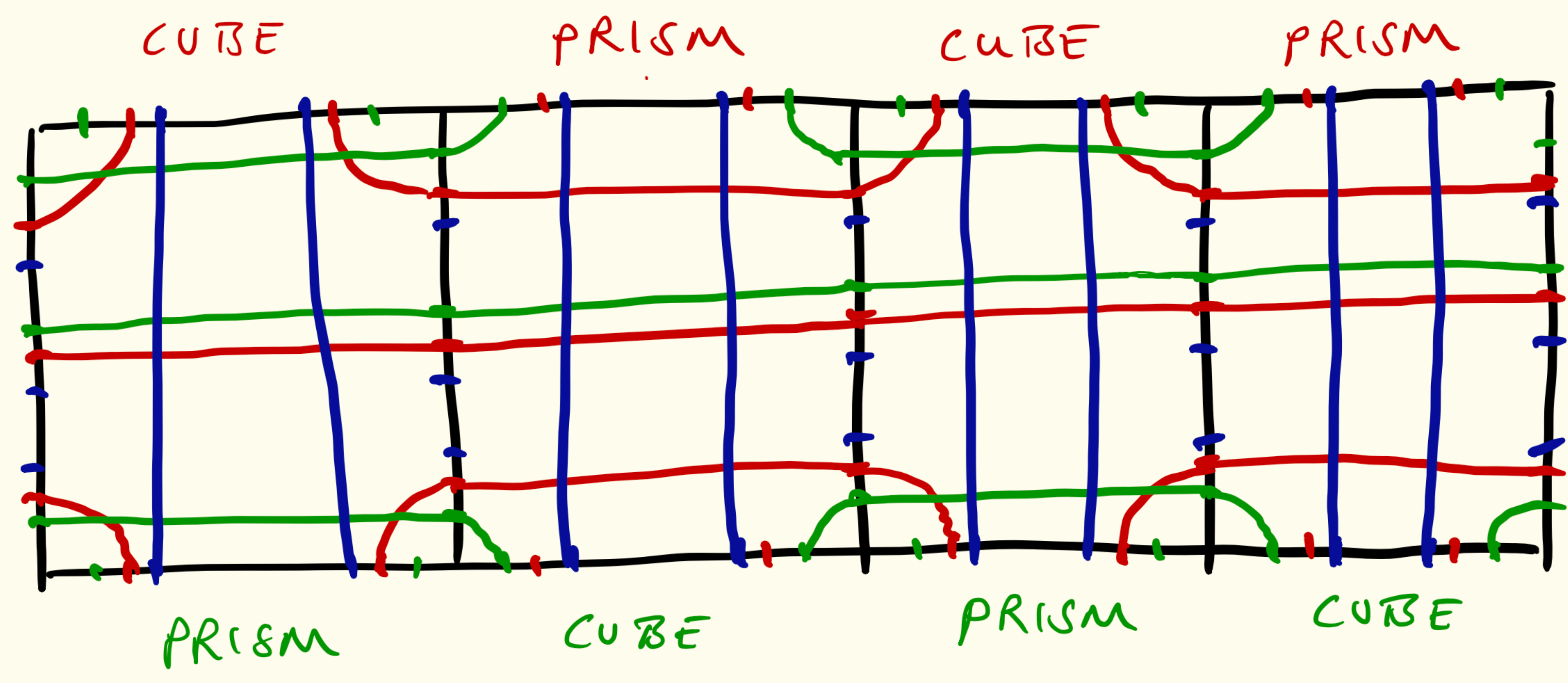}
\caption{The trisection diagram on a cubulated annulus of $\Sigma$: the top shows the placement of the intersection points with the spine; the bottom picture shows the curves on the cubulated annulus consistent with Figure~\ref{fig:2-4-move}. Indicated is also that whenever the ball structure of the red submanifold meets a square of the central surface in a cube, then the ball structure of the green submanifold meets it in a prism and vice versa.}
\label{fig:diagram}
\end{figure}

One can make the placement of the discs in the blocks completely canonical as follows.
Each $1$--simplex that is in a subdivided $1$--cube of the spine is oriented towards the midpoint of the $1$--cube.
One then marks the potential intersection points of normal discs with these $1$--simplices as $\frac{1}{5}$ from the initial vertex for $H_{12}$, $\frac{2}{5}$ for $H_{02}$, and $\frac{3}{5}$ for $H_{12}$.
Each internal edge of an ``H'' is oriented arbitrarily and the potential intersection points are marked at $\frac{4}{10}$, $\frac{5}{10}$, and $\frac{10}{10}$ respectively.
These placements can now be extended linearly over the cubes and prisms and give markings on the squares of the central surface.
The resulting curves are hence transverse.
After a clean-up step that removes parallel copies of curves, we obtain the desired trisection diagram.

\section{Complexity bounds}

In this section, the aim is to give a bound for the genus of the central trisection surface in terms of the number of $4$--simplices in a triangulation of a $4$--manifold.

\begin{Thm}
\label{thm:genus}
Suppose $M$ is a closed orientable $4$--manifold with a triangulation having a ts-tricoloring and $n$ pentachora.
Then the genus of the central trisection surface is at most  $n/2$.
\end{Thm}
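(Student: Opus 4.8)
The plan is to compute the Euler characteristic of the central surface $\Sigma$ directly from the quadra-pentachoron decomposition, using the fact (established in the proof of Construction~\ref{con:modify pseudo-tricoloring}) that $\Sigma \cap Q$ is an annulus made of four squares for each quadra-pentachoron $Q$, and that $M$ is tiled by these quadra-pentachora meeting along common boundary tetrahedra. Since each annulus has zero Euler characteristic, $\chi(\Sigma)$ is concentrated entirely in how the annuli are glued along their boundary circles; equivalently, I would count vertices, edges, and squares in the induced cubulation of $\Sigma$ and see how the square count controls the genus.

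First I would set up the bookkeeping. A ts-tricolored triangulation with $n$ pentachora arises, after the 2--4 moves, as a union of quadra-pentachora; but one must be careful, because $n$ here is the number of pentachora \emph{after} the construction, and a quadra-pentachoron has four pentachora, so there are $n/4$ quadra-pentachora (and $n$ is divisible by $4$). Wait — re-examining: the theorem hypothesis is simply ``a triangulation having a ts-tricoloring and $n$ pentachora,'' so I would instead argue at the level of pentachora directly. Each pentachoron meets $\Sigma$ in exactly one square (Figure~\ref{fig:pieces}), so the cubulation of $\Sigma$ has exactly $n$ squares. Then $\chi(\Sigma) = V - E + F$ with $F = n$. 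Each square has four edges and four vertices; each edge of the cubulation lies in a $3$--simplex of the triangulation and so is shared by at most two squares, and in a closed manifold exactly two, giving $E = 2n$. The vertex count $V$ is the number of $2$--simplices of the triangulation whose barycenter lies on $\Sigma$ — equivalently the bichromatic $2$--simplices meeting all the relevant color conditions — but rather than pin this down exactly, I would only need the inequality $V \ge 1$ together with connectedness, or more sharply bound $V$ using that each vertex of the cubulation has even valence at least $4$ (so $2E \ge 4V$, i.e. $V \le E/2 = n$). Combining, $\chi(\Sigma) = V - E + F = V - 2n + n = V - n$, and since $\Sigma$ is a connected closed orientable surface, $\chi(\Sigma) = 2 - 2\genus(\Sigma)$, so $\genus(\Sigma) = 1 - \chi(\Sigma)/2 = 1 - (V-n)/2 = 1 + (n - V)/2$. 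Using $V \ge 1$ this gives $\genus(\Sigma) \le 1 + (n-1)/2 = (n+1)/2$; to get the sharper bound $n/2$ I would need $V \ge 2$, which holds because the cubulation of a closed surface by $n \ge 2$ squares cannot have a single vertex (a one-vertex cubulation would force $E = 2$, $F = 1$, impossible with $F = n \ge 2$), and the case $n = 1$ is degenerate. A cleaner route: bound $V$ from below by exhibiting at least two distinct $2$--simplices of the triangulation carrying central-surface squares, using that $M$ has at least two pentachora in any ts-tricolored triangulation (from the doubling/quadrupling structure) and distinct pentachora can share at most one central square-vertex.

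The step I expect to be the main obstacle is getting the constant exactly right rather than off by a small additive term — specifically, pinning down the minimum possible value of $V$, the number of barycenters of $2$--simplices lying on $\Sigma$. The annulus decomposition from Construction~\ref{con:modify pseudo-tricoloring} may be the right tool here: if $M$ is tiled by $q$ quadra-pentachora then $\Sigma$ is tiled by $q$ annuli, each contributing some fixed number of boundary vertices, and analyzing the gluing graph of these annuli (a graph whose Euler characteristic equals that of $\Sigma$) should yield $\genus(\Sigma) \le q = n/4 \le n/2$ with room to spare — in fact this suggests the bound $n/2$ is far from tight and the honest statement is closer to $n/4$. I would therefore present the annulus-gluing argument as the primary proof: collapse each annulus to its core circle, obtaining a graph $G \subset \Sigma$ onto which $\Sigma$ deformation retracts after cutting, with one vertex per ``handle'' added and controlled edges, and read off $\genus(\Sigma) = 1 - \chi(G)/2$ directly; then verify $\chi(G) \ge 2 - n$ by the edge/vertex count above. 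The remaining care is simply ensuring the small cases and the parity work out so the stated bound $n/2$ is literally correct.
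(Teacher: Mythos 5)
Your core bookkeeping is exactly the paper's: each pentachoron contributes one square to $\Sigma$, so the induced quadrangulation has $F=n$ faces, $E=2n$ edges (each edge on $\Sigma$ lies in a $2$--simplex of the triangulation and is shared by exactly two squares), and $V\ge 1$ vertices, giving $\chi(\Sigma)\ge 1-n$ and $\genus(\Sigma)\le(n+1)/2$. Up to that point you have, in fact, reproduced the paper's proof.

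The gap is in the final step. You try to sharpen to $n/2$ by arguing $V\ge 2$, and the sub-claim you offer — that a one-vertex cubulation ``would force $E=2$, $F=1$'' — is simply false. Nothing prevents a quadrangulation with $V=1$, $E=2n$, $F=n$; the incidence relations $2E=4F$ and $\sum_v\deg(v)=2E$ are both satisfied (all degree concentrated at the single vertex). So your route to $V\ge 2$ does not work, and the ``cleaner route'' you sketch via annulus cores is also not developed far enough to close the gap (and your heuristic that it should give $n/4$ is unsubstantiated). The paper instead finishes with a parity observation you never invoke: in a tricolored triangulation each pentachoron has a unique facet meeting only two of the color classes and a unique mate across that facet, so the pentachora pair up into double-pentachora and $n$ is necessarily even. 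Since $\chi(\Sigma)$ is an even integer for a closed orientable surface, the bound $\chi(\Sigma)\ge 1-n$ with $n$ even already forces $\chi(\Sigma)\ge 2-n$, i.e.\ $\genus(\Sigma)\le n/2$. Replacing your flawed $V\ge 2$ argument by this parity step completes the proof and matches the paper's reasoning.
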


\begin{proof}
Every pentachoron contributes one quadrilateral to the central trisection surface $\Sigma$.
Hence $\Sigma$ has a quadrangulation with $n$ quadrilaterals, $2n$ edges, and at least one vertex.
The Euler characteristic satisfies $\chi(\Sigma) \ge 1-n$   and this implies that  $\genus(\Sigma) \le  (n+1)/2 $, since $n$ is even.
\end{proof}

\begin{Coro}
\label{cor:genus bound arbitrary}
Suppose $M$ is a closed orientable $4$--manifold with an arbitrary triangulation with $m$ pentachora.
Then there is a trisection with central surface having genus at most $60m$.
\end{Coro}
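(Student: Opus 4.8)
The plan is to chain together the two effective constructions already established. Starting from an arbitrary triangulation of $M$ with $m$ pentachora, Theorem~\ref{thm:ts-coloring} produces a triangulation with $120m$ pentachora admitting a ts-tricoloring. This ts-tricoloring pulls back the dual cubical structure of $\Delta^2$ to an honest trisection of $M$, so the central surface $\Sigma$ of that trisection is well defined. We then feed this triangulation — which has a ts-tricoloring and $n = 120m$ pentachora — into Theorem~\ref{thm:genus}, which bounds the genus of the central trisection surface by $(n+1)/2$ (the excerpt writes $n/2$, but the proof of Theorem~\ref{thm:genus} actually gives $(n+1)/2$; either way the order of magnitude is $n/2$).

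Substituting $n = 120m$ gives genus at most $(120m+1)/2 = 60m + \tfrac{1}{2}$, and since the genus is an integer this is at most $60m$. So the bound $60m$ follows, provided one checks the small bookkeeping: $n = 120m$ is even, so the parenthetical ``since $n$ is even'' in the proof of Theorem~\ref{thm:genus} is satisfied, and the rounding from $60m + \tfrac12$ down to $60m$ is legitimate. I would also remark explicitly that a ts-tricoloring does yield a genuine trisection (this is recorded in the paragraph defining ts-tricoloring: ``the dual cubical structure of $\Delta^2$ pulls back to a trisection of $M$''), so the phrase ``there is a trisection'' in the corollary is justified.

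Honestly, there is essentially no obstacle here — the corollary is a direct arithmetic composition of the two main theorems, and the only thing that needs a moment's care is the parity/rounding step and making sure one is allowed to apply Theorem~\ref{thm:genus} to the output of Theorem~\ref{thm:ts-coloring} (which one is, since that output is by construction a triangulation with a ts-tricoloring). If one wanted to be slightly sharper, one could note that the $120m$ in Theorem~\ref{thm:ts-coloring} comes from $60n$ in Construction~\ref{con:construct pseudo-tricoloring} followed by $2n$ in Construction~\ref{con:modify pseudo-tricoloring}, and that the genus bound therefore inherits whatever savings one can extract from those two steps; but for the stated corollary the crude composition suffices, and I would simply write: by Theorem~\ref{thm:ts-coloring} there is a ts-tricolored triangulation with $120m$ pentachora, and by Theorem~\ref{thm:genus} its central trisection surface has genus at most $(120m+1)/2 \le 60m$.
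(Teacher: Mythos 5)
Your proof is correct and matches the paper's, which simply cites the combination of Theorem~\ref{thm:ts-coloring} and Theorem~\ref{thm:genus}. You add a little more bookkeeping on the parity/rounding step, but the route is identical.
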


\begin{proof}
This follows by combining the results of Theorem~\ref{thm:ts-coloring} and Theorem~\ref{thm:genus}.
\end{proof}

Since a surgery description of a $4$--manifold $M$ can be converted into a triangulation, this implies an upper bound for the genus of a trisection of $M$, with input a Kirby diagram. It would be interesting to determine explicit complexity bounds on the trisection genus from different descriptions of $4$--manifolds, such as Kirby diagrams.

\begin{Que}
Can one find general bounds that are asymptotically sharp for infinite families of examples?
\end{Que}

%%%%%%%%%%%%%%%%%%%%%%%%%%%
%%%%%%%%%%%%%%%%%%%%%%%%%%%

\section{Moves simplifying tricolored triangulations}

Given a c-tricolored (respectively ts-tricolored) triangulation, we give a criterion to obtain a \emph{collapsed} triangulation that is also c-tricolored (respectively ts-tricolored) but has only three vertices, one for each color. We also show that tricolored triangulations can be simplified with some Pachner moves whilst maintaining the coloring property.

\subsection{Tricolored triangulations with few vertices}

An edge $E$ of a triangulation $\mathcal{T}$ of a  $4$--manifold is contained in a bubble $2$--sphere $S$ if the following three conditions are satisfied: 
\begin{enumerate}
\setlength\itemsep{0em}
\item  There is an even collection of singular 2--simplices $F_1, F_2, \dots , F_{2k}$ of $\mathcal{T},$ each containing the edge $E$.
\item For each $i$ the remaining edges of $F_i$ can be labelled $E_i^-$ and $E_i^+$, such that $E_i^+ = E_{i+1}^-$ and $E_{2k}^+=E_1^-.$
\item If there is a tricolouring of the triangulation, then $E$ is a monochromatic edge with ends on two different vertices. 
\end{enumerate}

We say that $\tri^\star = (\widetilde{\Delta}^\star, \Phi^\star)$ is obtained from $\tri = (\widetilde{\Delta}, \Phi)$ by \emph{collapsing} the edge $E$ if $\widetilde{\Delta}^\star$ consists of all pentachora in $\widetilde{\Delta}$ not containing the edge $E$ and the face pairings $\Phi^\star$ are obtained as follows. Each facet $\tau$ in $\widetilde{\Delta}^\star$ is the domain of a unique $\varphi_\tau \in \Phi.$ If the codomain of $\varphi_\tau$ is also a facet in $\widetilde{\Delta}^\star,$ then $\varphi_\tau \in \Phi^\star.$ Otherwise the codomain of $\varphi_\tau$ is a facet $\tau_1$ of a pentachoron $\sigma_1$ containing $E$ and the collapse $\chi$ of $\sigma_1$ naturally identifies this with another facet $\tau_2$ of $\sigma_1.$ If the codomain of $\varphi_{\tau_2}$ is in $\widetilde{\Delta}^\star,$ then we let $\varphi_{\tau_2}\circ\chi\circ \varphi_\tau\in\widetilde{\Delta}^\star.$ Otherwise this procedure propagates through a finite number of facets of collapsed pentachora until it terminates at a facet in $\widetilde{\Delta}^\star.$ We note that at this stage, no claim was made that $|\tri^\star|$ is a manifold or PL equivalent with $|\tri|.$

\begin{Thm}
\label{thm:edgecollapse}
 Suppose that $\mathcal{T}$ is a triangulation of a $4$--manifold which admits a c-tricoloring (respectively a ts-tricoloring) and that $E$ is a monochromatic edge which is not contained in any bubble $2$--sphere. 
If  $\mathcal{T}^*$ is obtained by collapsing $E$, then $|\tri^\star|$ is PL equivalent with $|\tri|$ and $\mathcal{T}^*$ admits a c-tricoloring (respectively a ts-tricoloring). 
\end{Thm}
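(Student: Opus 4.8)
The plan is to realize the edge collapse as a local modification of $\tri$ that is supported in the closed star $\overline{\operatorname{st}}(E)$ of $E$, and to treat its effect on the topology and on the coloring separately. Write $E = ab$ with $a \ne b$. Since $|\tri|$ is a $4$--manifold, $\lk(E)$ is a triangulated $2$--sphere (hence PL), so $\overline{\operatorname{st}}(E) = \overline{E} * \lk(E)$ is a PL $4$--ball with boundary $(\{a\} * \lk(E)) \cup_{\lk(E)} (\{b\} * \lk(E)) \cong S^{3}$. Nothing outside $\overline{\operatorname{st}}(E)$ is changed, so both conclusions of the theorem reduce to local statements about this ball and about the re-gluing $\Phi^\star$. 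The role of the bubble $2$--sphere hypothesis throughout is to rule out the degenerate configurations --- self-identifications among the boundary tetrahedra of $\overline{\operatorname{st}}(E)$, equivalently the configurations assembled by the triangles $F_i$ --- in which the collapse fails to behave like a fold of a standard $4$--ball; I expect the careful check that bubble-freeness is \emph{exactly} the right hypothesis in the singular category to be the main obstacle.

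For the PL equivalence, I would describe the collapse explicitly as such a fold. Each pentachoron containing $E$ has the form $\sigma = \overline{E} * \overline{\tau}$, where $\overline{\tau}$ is the triangle on the three vertices of $\sigma$ other than $a, b$; the procedure deletes $\sigma$ and identifies, for each face $\rho$ of $\overline{\tau}$, the face $\{a\} * \rho$ with the face $\{b\} * \rho$ by the affine isomorphism sending $a \mapsto b$ and fixing $\rho$. Performing all of these at once removes $\overline{\operatorname{st}}(E)$ from $|\tri|$ and reglues the two $3$--ball hemispheres $\{a\} * \lk(E)$ and $\{b\} * \lk(E)$ of its boundary $S^{3}$ by the cone on the identity of $\lk(E)$. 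Granting that bubble-freeness makes $\overline{\operatorname{st}}(E)$ an unfolded standard $4$--ball --- so that the excised region is a single $\overline{E} * S^{2}$ and the propagation defining $\Phi^\star$ always terminates after one step --- it remains to verify that every link in $\tri^\star$ is a standard sphere. The links that change are those of the simplices receiving a new identification under the fold (the new vertex $v = a \sim b$, the identified edges $aw \sim bw$ for vertices $w$ of $\lk(E)$, and so on), and each is obtained by deleting an open ball from each of two standard spheres and gluing the two resulting balls along a standard sphere, hence is again standard --- for instance the link of $v$ is $\lk(a)$ and $\lk(b)$, each an $S^{3}$, with the stars of $b$ and of $a$ removed and the resulting $3$--balls glued along $\lk(E) = S^{2}$. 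Thus $|\tri^\star|$ is a PL $4$--manifold and the fold extends to a PL homeomorphism $|\tri^\star| \to |\tri|$.

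For the colorings, note first that $E$ being monochromatic means $a$ and $b$ lie in the same partition set, so $\{P_0, P_1, P_2\}$ descends to a partition of the vertices of $\tri^\star$. Every pentachoron of $\tri^\star$ is a pentachoron of $\tri$ not containing $E$, so it still has five distinct vertices (no two are identified in $\tri^\star$, since that would force it to contain $E$) meeting the $P_i$ with multiplicities $2, 2, 1$; hence $\tri^\star$ is tricolored. If $E$ has color $k$, then $\Gamma^\star_k$ is $\Gamma_k$ with the edge $E$ contracted, hence remains connected. For $j \ne k$, no monochromatic edge $e$ of color $j$ can be destroyed: $e$ and $E$ are disjoint (their endpoints have different colors), so if every pentachoron containing $e$ also contained $E$ then every pentachoron of $\overline{\operatorname{st}}(e)$ would contain the $3$--simplex $e * E$, forcing every triangle of the triangulated $2$--sphere $\lk(e)$ to contain the edge of $\lk(e)$ corresponding to $e * E$ --- which is impossible. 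Hence $\Gamma^\star_j \supseteq \Gamma_j$ remains connected, and $\tri^\star$ is a c-tricoloring.

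Finally, in the ts-case one must show that the preimage of each interior $1$--cube of $\Delta^2$ under the new map $\mu^\star \colon |\tri^\star| \to \Delta^2$ still collapses onto a $1$--dimensional spine. Outside $\overline{\operatorname{st}}(E)$ the maps $\mu^\star$ and $\mu$ agree and the spines prescribed in the proof of Construction~\ref{con:modify pseudo-tricoloring} are untouched. Inside $\overline{\operatorname{st}}(E)$, since $E$ has color $k$ the map $\mu$ sends $\overline{\operatorname{st}}(E)$ into the cone from $v_k$ and each pentachoron $\overline{E} * \overline{\tau}$ around $E$ onto that cone over a sub-edge of $\Delta^2$; this pins down the combinatorial type of the square of $\Sigma$, of the cube and prism pieces of the $H_{ij}$, and of the $4$--dimensional pieces of the three handlebodies lying in $\overline{\operatorname{st}}(E)$. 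The fold identifies the $\{a\} * \overline{\tau}$ piece with the $\{b\} * \overline{\tau}$ piece of each such pentachoron, and one checks that the prescribed collapses onto the ``H''s, the core circles, and the monochromatic graphs respect this identification and still match the collapses on $\partial \overline{\operatorname{st}}(E)$, with bubble-freeness ruling out a spine component being pinched into a loop. Since each $H_{ij}^\star$ then still has a $1$--dimensional spine, $\tri^\star$ is a ts-tricoloring. Alternatively, one could try to choose the PL homeomorphism $|\tri^\star| \to |\tri|$ so as to preserve the radial structure of $\overline{\operatorname{st}}(E)$ over $v_k$, making it carry $\mu^\star$-preimages of the dual cubes to $\mu$-preimages and deducing the ts-property directly from the hypothesis on $\tri$; this spine preservation, together with the PL equivalence above, is where I expect the real work to lie.
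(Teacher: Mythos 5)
Your argument takes a genuinely different route from the paper's, and unfortunately the route you chose sidesteps the key content of the theorem. You assume that $\overline{\operatorname{st}}(E)$ is the simplicial join $\overline{E} * \lk(E)$, hence an unfolded PL $4$--ball, and that $\lk(E)$ is a triangulated $2$--sphere that embeds as a subcomplex. That identity only holds in a \emph{simplicial} triangulation. The paper works throughout with \emph{singular} triangulations (multiple pentachora can share a facet, and a pentachoron can have identifications among its own lower faces), and in that setting the pentachora containing $E$ can have self-identifications along faces not containing $E$, so $\overline{\operatorname{st}}(E)$ need not be a ball at all. This is exactly what the bubble $2$--sphere hypothesis is there to control. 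Indeed, look at condition (2) in the definition: in a simplicial complex, if $F_i = E * p_i$ then $E_i^{+} = E_{i+1}^{-}$ forces $p_i = p_{i+1}$, so bubble $2$--spheres cannot occur. Your proof therefore only covers the case in which the hypothesis is vacuous, and you acknowledge this (``I expect the careful check that bubble-freeness is exactly the right hypothesis in the singular category to be the main obstacle'') but then never return to it. That deferred check \emph{is} the theorem.

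The paper's proof handles the singular case head-on by a different mechanism: it shows that the collapse map $|\tri| \to |\tri^\star|$ is cell-like (point preimages are points or finite trees) precisely when $E$ lies in no bubble $2$--sphere. A bubble $2$--sphere produces a loop in the preimage of a point --- collapsing across the $2$--sphere maps it to an interval and performs a surgery --- so its absence is necessary and sufficient. Given cell-likeness, the PL homeomorphism is then built directly, following Cannon, by factoring the collapse through the skeleta of the triangulation into a sequence of simplicial collapses of embedded balls; the preservation of the ts-tricoloring also falls out of this, rather than requiring the case-by-case spine matching you sketch in your final paragraph (which you again flag as ``where I expect the real work to lie''). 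So the two approaches buy different things: your fold picture is clean and explicit in the simplicial case and correctly identifies the shape of the final homeomorphism, but only the cell-like-map argument (or something replacing it) actually engages with the singular identifications that the hypothesis is about.

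One smaller issue: your c-tricoloring step is aimed at the wrong target. You argue that for $j \ne k$ a monochromatic edge $e$ of color $j$ ``cannot be destroyed'' by showing $e$ lies in a pentachoron not containing $E$. But edges other than $E$ are never deleted in the collapse --- only pentachora containing $E$ are removed, and all of their proper faces survive (some after identification). Since $e$ is disjoint from $E$ (its endpoints have a different color), $e$ is not even involved in the new identifications $\{a\}*\rho \sim \{b\}*\rho$, so $\Gamma_j$ passes through unchanged. The conclusion is right but the argument about stars of $e$ and links being unable to have a global cone edge is a non sequitur. The paper's one-line version --- $\Gamma_k$ has $E$ contracted, the others are unchanged --- is the correct and complete argument here.
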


\begin{proof}
Consider a monochromatic edge $E$ joining two distinct vertices $v, v^\prime$ colored $R$, without loss of generality.
Each pentachoron $\sigma$ containing $E$ is the join of $E$ and a triangular face $\Delta$ with vertices colored either $BBG$ or $BGG$.
The two tetrahedral facets $\tau, \tau^\prime$ of $\sigma$ with vertices those of $\Delta$ and one of $v, v^\prime$ respectively are identified when $E$ is collapsed.
Clearly the collapse of $E$ preserves the tricoloring and each monochromatic subgraph is either unchanged or in the case of $\Gamma_R$ has the edge $E$ collapsed to a vertex. Whence the property of c-tricoloring is preserved.
Moreover the result of collapsing is a new manifold PL homeomorphic to the original one if the collapsing map is cell-like, i.e the inverse image of a point in the identification space after collapsing is either a point or a finite tree. This also implies that the property of ts-tricoloring is preserved. We claim this map is cell-like when $E$ is not contained in any bubble $2$--spheres. 

The boundary of the collection of pentachora containing $E$ is the suspension of the link $S$ of $E$, where $S$ is the set of triangular faces $\Delta$ as in the previous paragraph. So this boundary is the union of two cones over $S$, with cone points $v,v^\prime$. Note that $S$ is obtained from a 2-sphere with vertices all colored $B,G$ and any identifications of these cones must preserve the colorings. 

Suppose a sequence $s_1,s_2, \dots s_k$ of pairs of edges joining two vertices of $S$ either both to $v$ or both to $v^\prime$, are identified. Assume also that for $s_i, s_{i+1}$ and $s_k,s_1$, one of the edges of each pair is collapsed to the same image. We can label the pairs of edges by $s_1=\{E_1^+,E_2^- \}, \dots  s_k=\{E_{2k}^+,E_1^- \}$. This produces a bubble 2-sphere and a loop in the inverse image of the points in these edges after collapsing. In fact, there is a singular foliation of the bubble $2$--sphere by loops which are inverse images of points. The effect of collapsing is to map the bubble $2$--sphere to an interval, which induces a surgery on the manifold.   This shows why the absence of  bubble $2$--spheres is necessary and sufficient to ensure that collapsing $E$ corresponds to a cell-like map.   

Finally see \cite{Lacher} for a general discussion of cell-like mappings in dimensions other than 4. We can construct the PL homeomorphism directly, since the cell-like collapsing map is of a simple type. We follow a method of J.W.~Cannon, factoring the edge collapse into a sequence of small collapses, using the skeleta of the triangulation. 

Consider, for example, the inverse image of an edge $E^\prime$, where the inverse image of each interior point is a finite tree and the inverse image of one vertex is a single vertex and of the other vertex of $E^\prime$ is the edge $E$. The resulting $2$--complex $F$ is the union of a cone over the tree and the mapping cylinder of the map of the tree to the edge $E$, as can be seen by decomposing over the inverse image of the midpoint of $E^\prime$. Notice that $F$ can be viewed as a tree of $2$-simplices.
$F$ can be collapsed by homotoping leaf $2$--simplices onto two of their boundary edges, one of which is $E$ and the other is where the $2$--simplex connects onto the rest of $F$, one $2$--simplex at a time. 

After collapsing all such inverse images of edges $E^\prime$, we can go on to collapse inverse images of $2$--simplices etc. Note each small collapse is then of a PL embedded ball, namely a simplicial collapse of an embedded simplex. But it is elementary to verify that collapsing such a ball gives a map which can be approximated by a PL homeomorphism. So this completes the proof. 
\end{proof}

Suppose $\mathcal{T}$ is a triangulation of a $4$--manifold which admits a c-tricoloring (respectively a ts-tricoloring). If, after each edge collapse, there are no bubble $2$--spheres, then we can perform a series of edge collapses to obtain a new triangulation $\mathcal{T}^*$ which has a c-tricoloring (respectively a ts-tricoloring) where each monochromatic graph has a single vertex, so is a wedge of circles.
In particular, $\mathcal{T}^*$ has precisely three vertices, one of each color.
It is an interesting problem to determine whether such a series of triangulations without bubble $2$--spheres can be found.

\subsection{Simplifying moves}

\begin{Thm}
Suppose $\mathcal{T}$ is a triangulation of a $4$--manifold which admits a tricoloring (respectively a c-tricoloring).
\begin{itemize}
\setlength\itemsep{0em}
\item Suppose that $\mathcal{T}^*$ is obtained by performing a 5--1 Pachner move on $\mathcal{T}$.
	Then $\mathcal{T}^*$ admits a tricoloring (respectively a c-tricoloring).
\item Suppose that $\mathcal{T}^*$ is obtained by performing a 4--2 Pachner move on $\mathcal{T}$. Then $\mathcal{T}^*$ admits a tricoloring.
\item Assume $\mathcal{T}^*$ is obtained by performing 3--3 Pachner move on $\mathcal{T}$ and the three pentachora involved in the 3--3 move have six vertices with three sets of pairs of colors.
	Then $\mathcal{T}^*$ admits a tricoloring.
\end{itemize}
\end{Thm}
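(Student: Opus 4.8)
The plan is to treat the three bullets separately, in each case tracking the change that the Pachner move makes to the coloring. Throughout, the key point is that a tricoloring is exactly the requirement that every pentachoron meets the three color classes in $2$, $2$, and $1$ vertices (so in particular all three colors occur in each pentachoron). A Pachner move leaves untouched every pentachoron outside the move, together with its colors, so for each move it suffices to name the vertices of the new pentachora, count their vertices of each color, and --- for the c-tricoloring versions --- check that the monochromatic graphs $\Gamma_0,\Gamma_1,\Gamma_2$ stay connected.

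For the 5--1 move, let $w$ be the interior vertex being removed, with link $\partial\Delta^4$ on vertices $v_0,\dots,v_4$; the five old pentachora are the joins of $w$ with the five facets of the simplex $[v_0,\dots,v_4]$, and the one new pentachoron is $\sigma=[v_0,\dots,v_4]$. First I would show the color distribution of $\{v_0,\dots,v_4\}$ must be $(2,2,1)$: if some color occurred at least three times among the $v_i$, then one of the five old pentachora would contain at least three vertices of that color, contradicting the tricoloring of $\mathcal{T}$; this excludes every distribution of $5$ vertices among three colors except $(2,2,1)$. Hence $\sigma$ meets the color classes in $2,2,1$ vertices, and every color still occurs after deleting $w$, so $\mathcal{T}^*$ is a tricolored triangulation. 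For the c-tricoloring refinement, observe that the move neither creates nor destroys any edge among $v_0,\dots,v_4$, so $\Gamma_j$ is literally unchanged for the two colors $j\neq\chi(w)$; for $\Gamma_{\chi(w)}$ one deletes $w$ together with its one or two edges to color-$\chi(w)$ vertices among the $v_i$, and if there are two such vertices then they are joined by an edge of $\sigma$ not through $w$, so $w$ is not a cut vertex and $\Gamma_{\chi(w)}$ remains connected.

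For the 4--2 move, let $e=[a,b]$ be the collapsed edge and $Q=\{c_0,c_1,c_2,c_3\}$ the vertices of its link $\partial\Delta^3$; the four old pentachora are the joins of $e$ with the four triangular faces of the link, and the two new ones are $\{a\}\cup Q$ and $\{b\}\cup Q$. Exactly as before, the tricoloring of $\mathcal{T}$ forces each color to occur at most twice among $Q$, so the color distribution of $Q$ is a permutation of $(2,1,1)$ or of $(2,2,0)$. In the first case, comparing the two old pentachora obtained by joining $e$ to the two link triangles through the color-repeated pair of $Q$ pins $\{\chi(a),\chi(b)\}$ down to the two colors occurring once in $Q$; in the second case the analogous comparison using the two color-repeated pairs forces $\chi(a)=\chi(b)$ to equal the color missing from $Q$. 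A one-line check then shows that $\{a\}\cup Q$ and $\{b\}\cup Q$ each meet the color classes in $2,2,1$ vertices; since no vertex is changed, $\mathcal{T}^*$ is tricolored. (In the $(2,2,0)$ case the move deletes the monochromatic edge $ab$, which is in general a cut edge of $\Gamma_{\chi(a)}$; this is exactly why only the tricoloring statement, and not a c-tricoloring statement, is made for the 4--2 move.)

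For the 3--3 move, name the shared triangle $\{t_0,t_1,t_2\}$ and the vertices $\{d_0,d_1,d_2\}$ of its link $\partial\Delta^2$; the three old pentachora are the joins of $\{t_0,t_1,t_2\}$ with the three edges of the link, and the three new ones are the joins of the three edges of $\{t_0,t_1,t_2\}$ with $\{d_0,d_1,d_2\}$. By hypothesis these six vertices are distinct and carry color distribution $(2,2,2)$, so the color distribution of $\{t_0,t_1,t_2\}$ is, after relabeling the colors, either $\{A,B,C\}$ or $\{A,A,B\}$, and that of $\{d_0,d_1,d_2\}$ is the complementary multiset; in either case a short enumeration over the three edges of $\{t_0,t_1,t_2\}$ shows that every new pentachoron $\{t_i,t_j\}\cup\{d_0,d_1,d_2\}$ meets the color classes in $2,2,1$ vertices, and since the move changes neither the vertex set nor its coloring, $\mathcal{T}^*$ is tricolored. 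I expect no single hard step here; the most delicate point is the connectivity argument in the c-tricoloring half of the 5--1 move, and, as is usual for Pachner moves on singular triangulations, one should bear in mind that the vertex counts used above rely on the relevant local configurations being embedded --- which is part of what it means for the move to be performable.
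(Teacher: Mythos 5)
Your proof is correct and follows the same underlying strategy as the paper: count the colors on the vertices of the local configuration, show the tricoloring of the old pentachora forces a rigid distribution, and check that the new pentachora then satisfy the $2$--$2$--$1$ condition. The presentational difference is that the paper argues once and for all via the ambient $5$--simplex $\Omega$ whose facets are the six pentachora involved: for the 5--1 and 4--2 moves the old tricolored facets force $\Omega$'s six vertices to be colored two of each color (otherwise at most three facets could be tricolored), whereupon \emph{every} facet of $\Omega$ is automatically tricolored; your move-by-move enumeration reaches the same conclusion but is noticeably longer for the 4--2 case. One small imprecision: in your 5--1 connectivity argument, the interior vertex $w$ necessarily has \emph{exactly one} monochromatic edge to a vertex of $\sigma$, since $\chi(w)$ is the color occurring only once among $v_0,\dots,v_4$; the ``two such vertices'' branch you hedge against cannot occur. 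So $w$ is simply a leaf of $\Gamma_{\chi(w)}$, and deleting a leaf does not disconnect --- which is precisely the paper's argument.
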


\begin{proof}
A Pachner move on a $4$--dimensional triangulation $\mathcal{T}$ consists of replacing a set of pentachora which embed in a $5$--simplex $\Omega$ by the complementary set of pentachora in $\Omega$.
Now $\Omega$ has six vertices and pentachora facets.

The facets coming from $\mathcal{T}$ are tricolored.
So we see immediately in the cases of 5--1 and 4--2 Pachner moves, that the vertices of $\Omega$ must be colored $RRBBGG$.
For if there were three vertices of the same color, then the number of tricolored facets would be at most $3$.
But if there are two vertices of each color, then every facet is tricolored.
Hence in this case, every Pachner move associated with $\Omega$ yields a tricolored triangulation.

It remains to check if $\mathcal{T}$ is c-tricolored, so is $\mathcal{T}^*$.
In the case of a 5--1 Pachner move, we can visualise this as replacing the cone on the facets of a pentachoron by the pentachoron.
Suppose the facets have vertices which are colored $RRBBG$, without loss of generality.
Then the central cone point is colored $G$.
If the monochromatic subgraphs are connected before we do this Pachner move, it is easy to see they are still connected afterwards.
For only $\Gamma_G$ changes by deleting the edge joining the two vertices colored $G$.
This edge clearly has a leaf vertex of the tree $\Gamma_G$ at the central cone point colored $G$.
So this does not disconnect the subgraph.

Consider next a 4--2 Pachner move.
Here, four pentachora share a common edge $E$.
Each pentachoron can be viewed as the join of $E$ and a triangular face $\sigma_i, 1 \le i \le 4$.
The four triangles are faces of a tetrahedral facet $\Pi$ and the replacement can be viewed as two pentachora sharing a facet $\Pi$.

Suppose first that the two vertices of $E$ have different colors, say $BG$ without loss of generality.
Then $\Pi$ has vertices with colors $RRBG$.
As in the case of the 5--1 Pachner move, the monochromatic edges deleted under our 4--2 move end at leaf vertices at $E$ so do not disconnect the monochromatic graphs.

Finally assume that the vertices of $E$ both have colors say $G$.
In this case, $\Pi$ has vertices with colors $RRBB$.
In particular, the monochromatic subgraphs $\Gamma_R, \Gamma_B$ do not change, whereas $\Gamma_G$ has $E$ deleted.
So this may disconnect $\Gamma_G$ and we may change a c-tricoloring into a tricoloring.

The case of a 3--3 Pachner move where the three pentachora have six vertices colored $RRBBGG$ is similar to the case of a 4--2 move.
In fact such a move only gives a c-tricoloring from an initial c-tricoloring, if the three pentachora share a triangular face with vertices colored $RBG$.
\end{proof}

\begin{Thm}
Assume $\mathcal{T}$ is a triangulation of a $4$--manifold which admits a tricoloring (respectively a c-tricoloring).

Suppose that $\mathcal{T}^*$ is obtained by performing a 0--2 or a 2--0 move on $\mathcal{T}$.
Then $\mathcal{T}^*$ admits a tricoloring (respectively a c-tricoloring).
\end{Thm}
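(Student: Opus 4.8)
The plan is to unwind the $0$--$2$ move explicitly and then carry out the (short) colour bookkeeping. Recall that a $0$--$2$ move inserts a \emph{pillow}: a facet $\tau$ of $\mathcal{T}$ is split apart, and between the two pentachora that meet along $\tau$ one glues in two new pentachora $\rho_1,\rho_2$, each the join $w*\tau$ of a single new vertex $w$ with $\tau$, with $\rho_1$ and $\rho_2$ identified along the four tetrahedra through $w$. Thus $w$ has valence two, and every pentachoron of $\mathcal{T}$ other than the two flanking $\tau$ is untouched; the $2$--$0$ move is the inverse, recognising such a pillow around a valence-two vertex and deleting it. So the only things to check are how to colour $w$ (for a $0$--$2$ move) and what happens to the three monochromatic graphs $\Gamma_0,\Gamma_1,\Gamma_2$.

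For the tricoloring statement I would first note that $\tau$ is a facet of a tricoloured pentachoron, whose colour multiset is $\{2,2,1\}$; deleting one vertex, $\tau$ has colour multiset either $\{2,1,1\}$ or $\{2,2,0\}$. In the first case colour $w$ with either colour occurring once in $\tau$; in the second, with the colour absent from $\tau$. Either way $\rho_1$ — and hence $\rho_2$, which has the same vertex colours — acquires multiset $\{2,2,1\}$, so $\mathcal{T}^*$ is tricoloured. For a $2$--$0$ move one simply deletes the pillow and keeps the colours of the surviving pentachora. This settles the unparenthesised assertion.

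For c-tricoloring, the $2$--$0$ move is clean. Let $w$, of colour $k$, be the removed valence-two vertex, lying only in $\rho_1,\rho_2=w*\tau$; its only possible neighbour in $\Gamma_k$ is a $k$-coloured vertex of $\tau$, and it must have exactly one: a pentachoron flanking $\tau$ is tricoloured and does not contain $w$, so $\Gamma_k$ has a vertex other than $w$, and were $w$ isolated then $\Gamma_k$ would be disconnected, contradicting the c-tricoloring of $\mathcal{T}$. Hence $w$ is a leaf, deleting it keeps $\Gamma_k$ connected, and $\Gamma_i,\Gamma_j$ are unchanged. For a $0$--$2$ move the same bookkeeping shows it suffices to colour $w$ so that it becomes a leaf of its monochromatic graph, which works exactly when $\tau$ has multiset $\{2,1,1\}$: colour $w$ with one of the two colours occurring once in $\tau$, joining $w$ to the unique $\tau$-vertex of that colour and leaving the other two graphs alone. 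The main obstacle is the complementary case, $\tau$ of multiset $\{2,2,0\}$: the tricoloring condition then forces $w$ to take the colour missing from $\tau$, and $w$ becomes an isolated vertex of that monochromatic graph. I would resolve this by showing that $\mathcal{T}$ always admits a c-tricoloring in which the facet used for the move has the generic multiset $\{2,1,1\}$ — for instance by a local re-colouring of the pentachora around $\tau$ that trades a singleton colour for a doubled one while keeping all three monochromatic graphs connected — and then reducing to the previous case. This re-colouring step, equivalently the task of pinning down exactly which $0$--$2$ moves are compatible with c-tricolorings, is the only part I expect to require more than routine multiset bookkeeping.
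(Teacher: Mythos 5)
Your argument agrees with the paper's own proof on the core bookkeeping: colour the new pillow vertex so that the two new pentachora have multiset $\{2,2,1\}$, observe that only one monochromatic graph changes, and for the $2$--$0$ direction observe that in a c-tricoloured triangulation the removed valence-two vertex must already be a leaf of its monochromatic graph, else that graph was disconnected to begin with. So in the generic case you are reproducing the paper's reasoning, and your $2$--$0$ analysis is actually spelled out more carefully than the paper's one-line ``the same argument applies.''

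Where you diverge is that you have put your finger on a real soft spot in the paper's proof. The paper opens with ``without loss of generality assume the vertices of $\Pi$ are coloured $RRBG$,'' but this is \emph{not} without loss of generality: a tricoloured pentachoron has vertex multiset $\{2,2,1\}$, and deleting the singleton vertex produces a facet with multiset $\{2,2,0\}$. Both facet types occur in every tricoloured triangulation. For a $0$--$2$ move on a $\{2,2,0\}$ facet the new vertex is forced to take the missing colour and has no neighbours in $\Pi$ of that colour, so it sits as an isolated vertex of its monochromatic graph and the c-tricoloring breaks --- exactly the obstruction you identify. The paper's proof simply does not address this case, whereas you correctly flag it. However, your proposed repair --- locally re-colour around $\Pi$ to trade a singleton for a doubleton, or restrict attention to $\{2,1,1\}$ facets --- is only sketched and not carried out, so your write-up inherits essentially the same gap. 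The honest reading of the theorem, consistent with its proof, is that the $0$--$2$ move should be applied only to a facet of type $\{2,1,1\}$ (equivalently, the new vertex is required to become a leaf); with that restriction both your argument and the paper's are complete, and for the $2$--$0$ move no restriction is needed since the c-tricoloring hypothesis already rules out the isolated case, as you observe.
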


\begin{proof}
To perform a 0--2 move, we pick a facet $\Pi$ and split it into a ``pillow''.
We then introduce an interior vertex $v$ in the pillow and cone to the boundary.
Without loss of generality assume the vertices of $\Pi$ are colored $RRBG$.
Then $v$ can be labeled either $B,G$.
It is easy to see the effect on the monochromatic graphs is that only the graph with the color of $v$ is changed.
In particular, this monochromatic graph has an extra loop added, so the property of being connected or not is preserved.

A 2--0 move is the opposite of a 0--2 move, so the same argument applies to establish that all the tricoloring properties of the triangulation are preserved.
\end{proof}

%%%%%%%%%%%%%%%%%%%%%%%%%%%

\section{Examples}
\label{sec:implementation}

Once a tricoloring is found for a triangulation, one needs to check the two properties that the monochromatic graphs are connected and that the 3--dimensional trisection submanifolds have 1--dimensional spines. An example of a tricolorable triangulation, where the former property holds but the latter fails, is the triangulation of $S^1 \times S^3$ with isomorphism signature \texttt{gLAAMQacbdcdefffcaTava4acavayaWaZa2a} \cite{Burton}. This has six pentachora and three vertices. The three monochromatic graphs are circles, but the central surface consists of three pairwise disjoint 2--tori. Two ts-tricolourable triangulations with six pentachora and three vertices of this manifold were found by Jonathan Spreer. These have isomorphism signatures 
\texttt{gLMPMQccdeeeffffaaaa9aaaaaaaaaaaaa9a} and \texttt{gLwMQQcceeeffeffaaaaaaaaaaLaLaLaLaLa} respectively. The central surface in each case is a 2--torus, and all 4--dimensional and 3--dimensional handlebodies have genus one.

Since $\pi_1(S^1 \times S^3) \isom \mathbb{Z}$ and the fundamental group of the central surface surjects onto the fundamental group of each handlebody (see the proof of Proposition~5 in \cite{RT2016}), the central surface of any trisection of $S^1 \times S^3$ has genus at least one.
Hence we recover the result of \cite{GK} that:

\begin{Coro}
The trisection genus of $S^1 \times S^3$ is one.
\end{Coro}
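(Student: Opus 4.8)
The plan is to establish matching upper and lower bounds on the trisection genus of $M = S^1 \times S^3$.

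For the upper bound, I would invoke the two ts-tricolored triangulations of $S^1 \times S^3$ recorded above (six pentachora, three vertices, due to Spreer). By Construction~\ref{con:modify pseudo-tricoloring} together with the discussion of trisection diagrams, every ts-tricoloring of a triangulation of $M$ induces an honest trisection of $M$, and by Theorem~\ref{thm:genus} its central surface has genus at most $n/2 = 3$. For these particular triangulations one checks directly that the central surface --- assembled from one square per pentachoron, with the squares glued into annuli as in Figure~\ref{fig:diagram} --- is a single $2$--torus, and likewise that each of the three $4$--dimensional and each of the three $3$--dimensional handlebodies has genus one. Hence $M$ admits a trisection whose central surface has genus exactly one, so the trisection genus of $M$ is at most one.

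For the lower bound, I would show that no trisection of $M$ can have a $2$--sphere as its central surface. Given any trisection $H_0 \cup H_1 \cup H_2$ with central surface $\Sigma$, a Van Kampen argument applied to the decomposition $M = \bigcup_i H_i$ shows that $\pi_1(\Sigma)$ surjects onto $\pi_1(H_i)$ for each $i$, and hence onto $\pi_1(M)$; this is exactly the content of the proof of Proposition~5 in \cite{RT2016}. Since $\pi_1(S^1 \times S^3) \isom \mathbb{Z}$ is infinite, $\pi_1(\Sigma)$ must be infinite, so $\Sigma$ is not a sphere, and therefore has genus at least one. (Equivalently: a genus-zero trisection has all $H_i$ equal to $4$--balls glued along $S^3$'s, which forces $M \homeo S^4 \neq S^1 \times S^3$.)

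Combining the two bounds gives that the trisection genus of $S^1 \times S^3$ equals one, recovering the computation of Gay and Kirby~\cite{GK}. The step requiring the most care is the verification that the two stated isomorphism signatures really do describe ts-tricolored triangulations of $S^1 \times S^3$ with torus central surface --- a finite check that can be performed by hand or with a computer --- whereas the lower bound is essentially immediate from the cited surjectivity $\pi_1(\Sigma) \twoheadrightarrow \pi_1(M)$.
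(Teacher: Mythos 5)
Your argument is correct and follows the same two-pronged strategy the paper uses: the explicit Spreer ts-tricolored triangulations with a $2$--torus central surface give the upper bound, and the surjection $\pi_1(\Sigma) \twoheadrightarrow \pi_1(M) \isom \mathbb{Z}$ from the proof of Proposition~5 in \cite{RT2016} rules out genus zero. The detour through Theorem~\ref{thm:genus} to get the bound $n/2 = 3$ is unnecessary --- the paper, like you, ultimately relies on directly observing that the central surface of the Spreer triangulations is a torus --- but it does not affect the validity of the argument.
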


We note that $S^1 \times S^3$ also has a triangulation with just two pentachora, \texttt{cMkabbb2aHaua2a}, so application of Corollary~\ref{cor:genus bound arbitrary} merely gives a bound of $\genus(\Sigma) \le 120$.
This highlights the fact that whilst our main bound gives a first and linear upper bound on the minimal genus of a central surface, in practice this may be far from optimal.

\begin{Que}
Are there interesting families of $4$--manifolds for which there exists an algorithm to compute a multisection of minimal genus for each member of the family?
\end{Que}

\begin{Que}
Are there interesting families of $4$--manifolds for which one can find ts-tricolorable triangulations in which the central surface is of minimal genus?\\
\end{Que}

Our current technique allow us to first determine a ts-tricolored triangulation with a large number of pentachora, and then to collapse this to a smaller triangulation.
This could be improved with better heuristics to produce tricolorable triangulations to which we apply 2--4 moves in order to obtain a ts-tricolored triangulation.
An indispensable tool for experimentation would be an implementation of our algorithms and heuristic procedures in Regina~\cite{Regina}.

%%%%%%%%%%%%%%%%%%%%%%%%%%%

\section*{Acknowledgements}
The authors thank the American Institute for Mathematics and the organizers of the workshop \emph{Trisections and low-dimensional topology}, where this work was initiated, and thank Jeff Meier and Jonathan Spreer for helpful discussions.
Hass was partially supported by NSF grant DMS-1719582-0.
The research of Rubinstein and Tillmann is partially supported under the Australian Research Council's Discovery funding scheme (project number DP160104502).
Tillmann thanks the DFG Collaborative Center SFB/TRR 109 at TU Berlin, where parts of this work have been carried out, for its hospitality.

%%%%%%%%%%%%%%%%%

\bibliographystyle{plain}
\bibliography{Trisection_Algorithms}

%%%%%%%%%%%%%%%%%

\address{Mark Bell\\Department of Mathematics, University of Illinois, Urbana, IL 61801, USA\\ 
{mcbell@illinois.edu\\-----}}

\address{Joel Hass\\Department of Mathematics, University of California Davis, Davis, CA 95616, USA\\ 
{hass@math.ucdavis.edu\\-----}}

\address{J. Hyam Rubinstein\\School of Mathematics and Statistics, The University of Melbourne, VIC, 3010, Australia\\ 
{joachim@unimelb.edu.au\\-----}}

\address{Stephan Tillmann\\School of Mathematics and Statistics, The University of Sydney, NSW 2006, Australia
\\ {stephan.tillmann@sydney.edu.au}}

\Addresses

\end{document}